
\documentclass{birkjour_t2}
%
%
%
\usepackage{amsmath}
\usepackage{amsfonts}
\usepackage{amssymb}
\usepackage[english]{babel}
\usepackage{color}
\usepackage{graphicx}
\usepackage{lmodern}
\usepackage{amsthm}
\usepackage{mathrsfs}
\usepackage{microtype}
\usepackage{mathscinet}
\usepackage{enumitem}
\usepackage[cal=boondoxo,bb=ams]{mathalfa}
\usepackage{hyperref}
\hypersetup{hidelinks}

\newtheorem{thm}{Theorem}[section]
\newtheorem{prop}{Proposition}[section]
\newtheorem{lem}{Lemma}[section]

\newtheorem{rem}{Remark}[section]

\newcommand{\ml}{\mathcal}
\newcommand{\mb}{\mathbb}
\DeclareMathOperator{\diag}{diag}
\DeclareMathOperator{\divv}{div}
\DeclareMathOperator{\intt}{int}
\DeclareMathOperator{\extt}{ext}

\begin{document}

%
%
%
%
%
%
%
%

\title[Doubly dissipative elastic waves]
 {Dissipative structure and diffusion phenomena for doubly dissipative elastic waves in two space dimensions}

\author[W. Chen]{Wenhui Chen}
\address{Institute of Applied Analysis, Faculty for Mathematics and Computer Science\\
	 Technical University Bergakademie Freiberg\\
	  Pr\"{u}ferstra{\ss}e 9\\
	   09596 Freiberg\\
	    Germany}
\email{wenhui.chen.math@gmail.com}

\subjclass{Primary 35B40; Secondary 35L15}

\keywords{{Dissipative elastic waves, friction, structural damping, energy estiamte, diffusion phenomenon.}
}
\date{January 1, 2004}

\begin{abstract} In this paper we study the Cauchy problem for doubly dissipative elastic waves in two space dimensions, where the damping terms consist of two different friction or structural damping. We derive energy estimates and diffusion phenomena with different assumptions on initial data. Particularly, we find the dominant influence on diffusion phenomena by introducing a new threshold of diffusion structure.

\end{abstract}

\maketitle

\section{Introduction}\label{Introduction}
In this paper we consider the following Cauchy problem for doubly dissipative elastic waves in two space dimensions:
	\begin{equation}\label{Eq.DoublyDissElasticWaves}
	\left\{
\begin{aligned}
&u_{tt}-a^2\Delta u-\left(b^2-a^2\right)\nabla\divv u+(-\Delta)^{\rho}u_t+(-\Delta)^{\theta}u_t=0,&&x\in\mb{R}^2,\,\,t>0,\\
&(u,u_t)(0,x)=(u_0,u_1)(x),&&x\in\mb{R}^2,
\end{aligned}
\right.
\end{equation}
where the unknown $u=u(t,x)\in\mb{R}^2$ denotes the elastic displacement. The positive constants  $a$ and $b$ in \eqref{Eq.DoublyDissElasticWaves} are related to the Lam\'e constants and fulfill $b>a>0$. Moreover, the parameters $\rho$ and $\theta$ in \eqref{Eq.DoublyDissElasticWaves} satisfy $0\leq\rho<1/2<\theta\leq1$.

Let us recall some related works to our problem \eqref{Eq.DoublyDissElasticWaves}. Taking $a=b=1$, $\rho=0$ and $\theta=1$ in \eqref{Eq.DoublyDissElasticWaves}, then we immediately turn to doubly dissipative wave equation, where the damping terms consist of \emph{friction} $u_t$ as well as \emph{viscoelastic damping} $-\Delta u_t$
\begin{equation}\label{Eq.DoublyDissWave}
\left\{
\begin{aligned}
&u_{tt}-\Delta u+u_t-\Delta u_t=0,&&x\in\mb{R}^n,\,\,t>0,\\
&(u,u_t)(0,x)=(u_0,u_1)(x),&&x\in\mb{R}^n,
\end{aligned}
\right.
\end{equation}
with $n\geq1$. The recent paper \cite{IkehataSawada2016} derived asymptotic profiles of solutions to \eqref{Eq.DoublyDissWave} in a framework of weighted $L^1$ data. Precisely, the authors found that from asymptotic profiles of solutions point of view,  friction $u_t$  is more dominant than viscoelastic damping $-\Delta u_t$ as $t\rightarrow\infty$. Later in \cite{IkehataMichihisa2018}, the authors obtained higher-order asymptotic expansions of solutions to \eqref{Eq.DoublyDissWave} and gave some lower bounds estimates to show the optimality of these expansions. For the other related works on \eqref{Eq.DoublyDissWave}, we refer the reader to the recent papers \cite{IkehataTakeda2017,DAbbicco2017,IkehataTakeda2019}. However, asymptotic profiles of solutions to general doubly dissipative wave equation, where the damping terms consist of friction or structural damping (i.e., taking $a=b=1$ in \eqref{Eq.DoublyDissElasticWaves}), are still open. This open problem is proposed in \cite{IkehataSawada2016}. The main difficulty is to answer what is the dominant profile of solutions, due to the fact that the asymptotic profiles for wave equation with damping term $(-\Delta)^{\rho}u_t$ for $0\leq \rho<1/2$, or with damping term $(-\Delta)^{\theta}u_t$ for $1/2<\theta\leq 1$, are quite different. One may see, for example, \cite{Matsumura1976,Karch2000,MarcatiNishihara2003,HosonoOgawa2004,Narazaki2004,Nishihara2003,Takeda2015,Ikehata2014,DabbiccoEbert2014,IkehataOnodera2017,Michihisa2017,Michihisa2018,IkehataTakeda2019NEW,Shibata2000,Ponce1985,DabbiccoReissig2014}.

 Let us come back to dissipative elastic waves. In recent years the Cauchy problem for dissipative elastic waves have aroused wide concern, which can be modeled by
	\begin{equation}\label{Eq.DissElasticWaves}
\left\{
\begin{aligned}
&u_{tt}-a^2\Delta u-\left(b^2-a^2\right)\nabla\divv u+\ml{A}u_t=0,&&x\in\mb{R}^n,\,\,t>0,\\
&(u,u_t)(0,x)=(u_0,u_1)(x),&&x\in\mb{R}^n,
\end{aligned}
\right.
\end{equation}
where $b>a>0$ and the term $\ml{A}u_t$ describes several kinds of damping mechanisms.\\
 In the case when
\begin{align*}
\ml{A}u_t=u_t,\,\,\,\,\text{i.e., \emph{friction} or \emph{external damping}},
\end{align*}
 the authors of \cite{IkehataCharaodaLuz2014} proved almost sharp energy estimates for $n\geq2$ by using energy methods in the Fourier space and the Haraux-Komornik inequality, and then the recent paper \cite{ChenReissig2019SD} investigated propagation of singularities, sharp energy estimates and diffusion phenomenon for $n=3$.\\
  Furthermore, in the case when
 \begin{align*}
 \ml{A}u_t=(-\Delta)^{\theta}u_t\,\,\,\,\text{with}\,\,\,\,\theta\in(0,1],\,\,\,\,\text{i.e., \emph{structural damping}},
 \end{align*}
    energy estimates are derived with different data spaces in \cite{IkehataCharaodaLuz2014} for $n\geq2$, and in \cite{Reissig2016} for $n=2$. Moreover, some qualitative properties of solutions, including smoothing effect, sharp energy estimate and diffusion phenomena (especially, \emph{double diffusion phenomena} when $\theta\in(0,1/2)$) are obtained for $n=3$.\\
    Finally, in the case when 
    \begin{align*}
    \ml{A}u_t=(-a^2\Delta-(b^2-a^2)\nabla\divv)u_t,\,\,\,\,\text{i.e., \emph{Kelvin-Voigt damping}},
    \end{align*}
     by applying energy methods in the Fourier space, almost sharp energy estimates for $n\geq2$ have been obtained in \cite{WuChaiLi2017}. Then, sharp energy estimates, $L^p-L^q$ estimates as well as asymptotic profiles of solutions are derived for $n=2$ in \cite{Chen2019KV}. Other studies on dissipative elastic waves can be found in literatures \cite{CharaoIkehata2007,CharaoIkehata2011}. Nevertheless, concerning about decay properties and diffusion phenomena for the Cauchy problem for doubly dissipative elastic waves it seems that we still do not have any previous research manuscripts. Moreover, this problem is strongly related to the open problem proposed in \cite{IkehataSawada2016}. In this paper we give the answer to the two-dimensional case. 
     
     Let us point out that the study of the Cauchy problem \eqref{Eq.DoublyDissElasticWaves} is not simply a generalization of elastic waves with friction or structural damping in \cite{Reissig2016,ChenReissig2019SD}. On one hand, because there exists two different damping terms $(-\Delta)^{\rho}u_t$ and $(-\Delta)^{\theta}u_t$ with $0\leq\rho<1/2<\theta\leq1$ in our problem \eqref{Eq.DoublyDissElasticWaves}, it is not clear which damping term has a dominant influence on dissipative structure. On the other hand, from the paper \cite{ChenReissig2019SD}, the authors derived diffusion phenomena to elastic waves with the damping term $(-\Delta)^{\theta}u_t$ where $\theta\in[0,1/2)\cup(1/2,1]$, which are described by the following so-called \emph{reference system}.
\begin{itemize}
	\item In the case when $\theta=0$, the reference system consist of heat-type system with mass term as follows:
	\begin{align*}
	\widetilde{V}_t-\ml{D}_1\Delta\widetilde{V}+\ml{D}_2\widetilde{V}=0,
	\end{align*}
	with real diagonal matrices $\ml{D}_1$ and $\ml{D}_2$.
	\item In the case when $\theta\in(0,1/2)$, the reference system consist of two different parabolic systems as follows:
	\begin{align*}
	\widetilde{V}_t+\ml{D}_3(-\Delta)^{1-\theta}\widetilde{V}+\ml{D}_4(-\Delta)^{\theta}\widetilde{V}=0,
	\end{align*}
	with real diagonal matrices $\ml{D}_3$ and $\ml{D}_4$.
	\item In the case when $\theta\in(1/2,1]$, the reference system consist of parabolic system and half-wave system as follows:
	\begin{align*}
	\widetilde{V}_t+\ml{D}_5(-\Delta)^{\theta}\widetilde{V}+i\ml{D}_6(-\Delta)^{\frac{1}{2}}\widetilde{V}=0,
	\end{align*}
	with real diagonal matrices $\ml{D}_5$ and $\ml{D}_6$.
\end{itemize}
Hence, for different choices of damping terms, which mainly depend on the value of the parameter $\theta$ in the damping term, the diffusion phenomena are quite different. In the Cauchy problem \eqref{Eq.DoublyDissElasticWaves}, the damping terms consist of $(-\Delta)^{\rho}u_t$ with $\rho\in[0,1/2)$, and $(-\Delta)^{\theta}u_t$ with $\theta\in(1/2,1]$. Thus, it is not clear that the reference system is make up of what kind of evolution systems, and how do two different damping terms influence on diffusion structure. Furthermore, from \cite{ChenReissig2019SD} we know \emph{the threshold of diffusion structure} is $\theta=1/2$ for elastic waves with structural damping. In other words, the structure of reference system will be changed from $\theta\in(0,1/2)$ to $\theta\in(1/2,1]$.  Then, the natural question is what is the threshold of diffusion structure for doubly dissipative elastic waves. Again, we give the answers for these questions in two dimensions.

Our main purpose of the present paper is to investigate dissipative structure and diffusion phenomena for doubly dissipative elastic waves with different assumptions on initial data. We find that the damping term $(-\Delta)^{\rho}u_t$ with $0\leq\rho<1/2$ has the dominant influence on energy estimates (see Theorems \ref{Thm.EnergyEst} and \ref{Thm.EnergyWeighted.L1}). Furthermore, in the case when $\rho+\theta<1$, the damping terms $(-\Delta)^{\rho}u_t$ and $(-\Delta)^{\theta}u_t$ with $0\leq\rho<1/2<\theta\leq1$ have the influence on diffusion structure at the same time. However, in the case when $\rho+\theta\geq1$, the diffusion structure is determined by the damping term $(-\Delta)^{\rho}u_t$ with $0\leq\rho<1/2$ only. Hence, one of our novelties is to derive a threshold $\rho+\theta=1$ of diffusion structure for doubly dissipative elastic waves.

This paper is organized as follows. In Section \ref{Sec.AsymptoticBehavior} we derive representation of solutions by applying WKB analysis and multistep diagonalization procedure. In Section \ref{Sec.EnergyEstimate} we obtain pointwise estimate in the Fourier space and energy estimates by using this representation. In Section \ref{Sec.AsymptoticProfiles} we derive diffusion phenomena with different assumptions on initial data. Finally, in Section \ref{Sec.ConcludingRemarks} some concluding remarks complete the paper.

\bigskip
\noindent\textbf{Notations: } In this paper $f\lesssim g$ means that there exists a positive constant $C$ such that $f\leq Cg$. We write $f\asymp g$ when $g\lesssim f\lesssim g$ Moreover, $H^s$ and $\dot{H}^s$ with $s\geq0$, denote Bessel and Riesz potential spaces based on $L^2$, respectively. Furthermore, $\langle D\rangle^s$ and $|D|^s$ stand for the pseudo-differential operators with symbols $\langle\xi\rangle^s$ and $|\xi|^s$, respectively, where $\langle\xi\rangle^2:=1+|\xi|^2$. We denote the identity matrix of dimensions $k\times k$ by $I_{k\times k}$. We denote the diagonal matrix by
\begin{align*}
\diag\left(e^{-\lambda_jt}\right)_{j=1}^4:=\diag\left(e^{-\lambda_1t},e^{-\lambda_2t},e^{-\lambda_3t},e^{-\lambda_4t}\right).
\end{align*}
The weighted spaces $L^{1,\gamma}$ for $\gamma\in[0,\infty)$ are defined by
\begin{align*}
L^{1,\gamma}:=\left\{f\in L^1:\|f\|_{L^{1,\gamma}}:=\int_{\mb{R}^n}(1+|x|)^{\gamma}|f(x)|dx<\infty\right\}.
\end{align*}
Finally, let us define the cut-off functions $\chi_{\intt}(\xi),\chi_{\text{bdd}}(\xi),\chi_{\extt}(\xi)\in \mathcal{C}^{\infty}$ having their supports in the following zones:
\begin{align*}
\ml{Z}_{\intt}(\varepsilon)&:=\left\{\xi\in\mb{R}^2:|\xi|<\varepsilon\ll1\right\},\\
\ml{Z}_{\text{bdd}}(\varepsilon,N)&:=\left\{\xi\in\mb{R}^2:\varepsilon\leq |\xi|\leq N\right\},\\
\ml{Z}_{\extt}(N)&:=\left\{\xi\in\mb{R}^2:|\xi|> N\gg1\right\},
\end{align*}
respectively, so that $\chi_{\intt}(\xi)+\chi_{\text{bdd}}(\xi)+\chi_{\extt}(\xi)=1$.

\section{Asymptotic behavior of solutions in the Fourier space}\label{Sec.AsymptoticBehavior}
In this section we will derive asymptotic behavior of solutions and representation of solutions in the Fourier space. Let us apply the partial Fourier transform with respect to spatial variable such that $\hat{u}(t,\xi)=\ml{F}_{x\rightarrow \xi}(u(t,x))$ to obtain
\begin{equation}\label{Eq.FourierDoublyDiss}
\left\{
\begin{aligned}
&\hat{u}_{tt}+|\xi|^{2}A(\eta)\hat{u}+\left(|\xi|^{2\rho}+|\xi|^{2\theta}\right)\hat{u}_t=0,&&\xi\in\mb{R}^2,
\,\,t>0,\\
&(\hat{u},\hat{u}_t)(0,\xi)=(\hat{u}_0,\hat{u}_1)(\xi),&&\xi\in\mb{R}^2,
\end{aligned}\right.
\end{equation}
where 
\begin{align*}
A(\eta)=\left(
{\begin{array}{*{20}c}
	a^2+\left(b^2-a^2\right)\eta_1^2 & \left(b^2-a^2\right)\eta_1\eta_2\\
	\left(b^2-a^2\right)\eta_1\eta_2 & a^2+\left(b^2-a^2\right)\eta_2^2\\
	\end{array}}
\right)
\end{align*}
with $\eta=\xi/|\xi|\in\mb{S}^1$. Similar as \cite{Reissig2016,Chen2019KV}, we introduce the matrix
\[
\begin{split}
M(\eta):=\left(
{\begin{array}{*{20}c}
	\eta_1 & \eta_2\\
	\eta_2 & -\eta_1 \\
	\end{array}}
\right),
\end{split}
\]
 and define a new variable $W=W(t,\xi)$ such that
\begin{equation*}
W:=\left(
\begin{aligned}
&v_t+i|\xi|\text{diag}(b,a)v\\
&v_t-i|\xi|\text{diag}(b,a)v
\end{aligned}\right),
\end{equation*}
where $v:={M}^{-1}(\eta)\hat{u}$. Moreover, we have $U(t,x)=\ml{F}^{-1}_{\xi\rightarrow x}(W(t,\xi))$. Next, the following first-order system can be derived:
\begin{equation}\label{Eq.FirstOrderSystem}
\left\{\begin{aligned}
&W_t+\left(\frac{1}{2}{B}_0|\xi|^{2\rho}+i{B}_1|\xi|+\frac{1}{2}{B}_0|\xi|^{2\theta}\right)W=0,&&\xi\in\mb{R}^2,\,\,t>0,\\
&W(0,\xi)=W_0(\xi),&&\xi\in\mb{R}^2,
\end{aligned}\right.
\end{equation}
 where the coefficient matrices ${B}_0$ and ${B}_1$ are respectively given by
\begin{equation}\label{Matrix.B0.B1}
\begin{split}
{B}_0=\left(
{\begin{array}{*{20}c}
	1 & 0  & 1 & 0 \\
	0  & 1 &  0 & 1\\
	1 & 0  & 1 & 0 \\
	0 & 1 & 0  & 1\\
	\end{array}}
\right) \,\,\,\, \text{and} \,\,\,\, {B}_1=\left(
{\begin{array}{*{20}c}
	-b & 0  & 0 & 0 \\
	0  & -a &  0 & 0\\
	0 & 0  & b & 0 \\
	0 & 0 & 0  & a\\
	\end{array}}
\right).
\end{split}
\end{equation}
Let us point out that throughout this section, we will study representation of solutions $U=U(t,x)$ to the following Cauchy problem by deriving representation of its partial Fourier transform $W(t,\xi)=\ml{F}_{x\rightarrow \xi}(U(t,x))$:
\begin{equation}\label{Eq.InvFirstOrderSystem}
\left\{\begin{aligned}
&U_t+\frac{1}{2}{B}_0(-\Delta)^{\rho}U+i{B}_1(-\Delta)^{\frac{1}{2}}U+\frac{1}{2}{B}_0(-\Delta)^{\theta}U=0,&&x\in\mb{R}^2,\,\,t>0,\\
&U(0,x)=U_0(x),&&x\in\mb{R}^2,
\end{aligned}\right.
\end{equation}
where the coefficient matrices $B_0$ and $B_1$ are given in \eqref{Matrix.B0.B1}. Moreover, to derive qualitative properties of solutions to \eqref{Eq.DoublyDissElasticWaves}, we only need to study the solutions to \eqref{Eq.InvFirstOrderSystem}.

With the aim of deriving representation of solutions, we may apply WKB analysis and multistep diagonalization procedure (see for example \cite{ReissigWang2005,Yagdjian1997,Jachmann2009,JachmannReissig2008,JachmannReissig2009,Reissig2016,Chen2019TEP}). Before doing these, we should understand the influence of the parameter $|\xi|$ on the asymptotic behavior of solutions to \eqref{Eq.FirstOrderSystem}. Due to our assumption $0\leq 2\rho<1<2\theta\leq2$, we now discuss the influence of $|\xi|$ by three parts. Specifically, we will apply diagonalization procedure for small frequencies $\xi\in\ml{Z}_{\intt}(\varepsilon)$ and large frequencies $\xi\in\ml{Z}_{\extt}(N)$ in Subsections \ref{SubSec.SmallFreq} and \ref{SubSec.LargeFreq}, respectively. Then, the contradiction argument will be applied to prove an exponential stability of solutions for bounded frequencies $\xi\in\ml{Z}_{\text{bdd}}(\varepsilon,N)$ in Subsection \ref{SubSec.BoundedFreq.}.

\subsection{Treatment for small frequencies}\label{SubSec.SmallFreq}
In the case when $\xi\in\ml{Z}_{\intt}(\varepsilon)$, it is clear that the matrix $\frac{1}{2}|\xi|^{2\rho}B_0$ has a dominant influence comparing with the matrices $i|\xi|B_1$ and $\frac{1}{2}|\xi|^{2\theta}B_0$. For this reason, by defining
\begin{align}\label{Matrix.T1}
T_1:=\left(
{\begin{array}{*{20}c}
	-1 & 0  & 1 & 0 \\
	0  & -1 &  0 & 1\\
	1 & 0  & 1 & 0 \\
	0 & 1 & 0  & 1\\
	\end{array}}
\right),
\end{align}
we introduce $W^{(1)}:=T_1^{-1}W$. Then, we may derive
\begin{align*}
W^{(1)}_{t}+\Lambda_1(|\xi|)W^{(1)}+\left(B_0^{(1)}(|\xi|)+B_1^{(1)}(|\xi|)\right)W^{(1)}=0,
\end{align*}
where
\begin{align*}
\Lambda_1(|\xi|)&=|\xi|^{2\rho}\diag(0,0,1,1)=\ml{O}\left(|\xi|^{2\rho}\right),\\
 B_0^{(1)}(|\xi|)&=|\xi|^{2\theta}\diag(0,0,1,1)=\ml{O}\left(|\xi|^{2\theta}\right),\\
B_1^{(1)}(|\xi|)&=i|\xi|T_1^{-1}B_1T_1=\ml{O}(|\xi|).
\end{align*}
Here
\begin{align*}
T_1^{-1}B_1T_1=\left(
{\begin{array}{*{20}c}
	0 & 0  & b & 0 \\
	0  & 0 &  0 & a\\
	b & 0  & 0 & 0 \\
	0 & a & 0  & 0\\
	\end{array}}
\right).
\end{align*}
In the second step we introduce $W^{(2)}:=T_2^{-1}W^{(1)}$, where
\begin{align}\label{Matrix.N2}
T_2:=I_{4\times4}+N_2(|\xi|) \quad\text{with}\quad N_2(|\xi|):=i|\xi|^{1-2\rho}\left(
{\begin{array}{*{20}c}
	0 & 0  & b & 0 \\
	0  & 0 &  0 & a\\
	-b & 0  & 0 & 0 \\
	0 & -a & 0  & 0\\
	\end{array}}
\right).
\end{align}
The following first-order system comes:
\begin{align*}
W_t^{(2)}+\Lambda_1(|\xi|)W^{(2)}+R_2(|\xi|)W^{(2)}=0,
\end{align*}
where
\begin{align*}
R_2=\underbrace{T_2^{-1}B_1^{(1)}(|\xi|)N_2(|\xi|)}_{=\ml{O}(|\xi|^{2-2\rho})}+\underbrace{T_2^{-1}B_0^{(1)}(|\xi|)T_2}_{=\ml{O}(|\xi|^{2\theta})}=\ml{O}\left(|\xi|^{\min\{2-2\rho;2\theta\}}\right).
\end{align*}
To understand the dominant term in the remainder $R_2(|\xi|)$, we distinguish between three cases.
\medskip

\noindent\emph{Case 2.1.1:} $\rho+\theta<1$.\\
In this case the matrix $T_2^{-1}B_0^{(1)}(|\xi|)T_2$ has a dominant influence. We find that this matrix can be rewritten by the following way:
\begin{align*}
T_2^{-1}B_0^{(1)}(|\xi|)T_2=B_0^{(1)}(|\xi|)+T_2^{-1}\left[B_0^{(1)}(|\xi|),N_2(|\xi|)\right].
\end{align*}
Thus, setting $W^{(3)}:=W^{(2)}$ implies
\begin{align*}
W^{(3)}_{t}+(\Lambda_1(|\xi|)+\Lambda_2(|\xi|))W^{(3)}+R_3(|\xi|)W^{(3)}=0,
\end{align*}
where $\Lambda_2(|\xi|)=B_0^{(1)}(|\xi|)=\ml{O}\left(|\xi|^{2\theta}\right)$ and
\begin{align*}
R_3(|\xi|)=\underbrace{T_2^{-1}B_1^{(1)}(|\xi|)N_2(|\xi|)}_{=\ml{O}(|\xi|^{2-2\rho})}+\underbrace{T_2^{-1}[\Lambda_2(|\xi|),N_2(|\xi|)]}_{=\ml{O}(|\xi|^{1+2\theta-2\rho})}=\ml{O}\left(|\xi|^{2-2\rho}\right).
\end{align*}
Because $2\theta>1$, the term $T_2^{-1}B_1^{(1)}(|\xi|)N_2(|\xi|)$ has a dominant influence in comparison with the term $T_2^{-1}[\Lambda_2(|\xi|),N_2(|\xi|)]$ in the remainder $R_3(|\xi|)$. We observe that
\begin{align*}
T_2^{-1}B_1^{(1)}(|\xi|)N_2(|\xi|)&=B_1^{(1)}(|\xi|)N_2(|\xi|)-N_2(|\xi|)T_2^{-1}B_1^{(1)}(|\xi|)N_2(|\xi|),\\
B_1^{(1)}(|\xi|)N_2(|\xi|)&=|\xi|^{2-2\rho}\diag\left(b^2,a^2,-b^2,-a^2\right).
\end{align*}
So, by taking $W^{(4)}:=W^{(3)}$ we have
\begin{align*}
W^{(4)}_{t}+(\Lambda_1(|\xi|)+\Lambda_2(|\xi|)+\Lambda_3(|\xi|))W^{(4)}+R_4(|\xi|)W^{(4)}=0,
\end{align*}
where $\Lambda_3(|\xi|)=B_1^{(1)}(|\xi|)N_2(|\xi|)=\ml{O}\left(|\xi|^{2-2\rho}\right)$ and
\begin{align*}
R_4(|\xi|)=-N_2(|\xi|)T_2^{-1}B_1^{(1)}(|\xi|)N_2(|\xi|)+T_2^{-1}[\Lambda_2(|\xi|),N_2(|\xi|)]=\ml{O}\left(|\xi|^{1+2\theta-2\rho}\right).
\end{align*}
Up to now, we have derived pairwise distinct eigenvalues and $R_4(|\xi|)=\ml{O}\left(|\xi|^{1+2\theta-2\rho}\right)$.
\medskip

\noindent\emph{Case 2.1.2:} $\rho+\theta=1$.\\
In this case the matrices $T_2^{-1}B_0^{(1)}(|\xi|)T_2$ and $T_2^{-1}B_1^{(1)}(|\xi|)N_2(|\xi|)$ have the same influence. For this reason, we set
\begin{align*}
\Lambda_2(|\xi|)=B_1^{(1)}(|\xi|)N_2(|\xi|)+B_0^{(1)}(|\xi|)=|\xi|^{2-2\rho}\diag\left(b^2,a^2,1-b^2,1-a^2\right)=\ml{O}\left(|\xi|^{2-2\rho}\right).
\end{align*}
Then, taking $W^{(3)}:=W^{(2)}$ again we derive
\begin{align*}
W_t^{(3)}+(\Lambda_1(|\xi|)+\Lambda_2(|\xi|))W^{(3)}+R_3(|\xi|)W^{(3)}=0,
\end{align*}
where
\begin{align*}
R_3(|\xi|)=-N_2(|\xi|)T_2^{-1}B_1^{(1)}(|\xi|)N_2(|\xi|)+T_2^{-1}\left[B_0^{(1)}(|\xi|),N_2(|\xi|)\right]=\ml{O}\left(|\xi|^{3-4\rho}\right).
\end{align*}
Up to now, we have derived pairwise distinct eigenvalues and $R_3(|\xi|)=\ml{O}\left(|\xi|^{3-4\rho}\right)$.
\medskip

\noindent\emph{Case 2.1.3:} $\rho+\theta>1$.\\
In this case the matrix $T_2^{-1}B_1^{(1)}(|\xi|)N_2(|\xi|)$ has a dominant influence. Following the idea from \emph{Case 2.1.1} and setting $W^{(3)}:=W^{(2)}$ again, we may derive
\begin{align*}
W_t^{(3)}+(\Lambda_1(|\xi|)+\Lambda_2(|\xi|))W^{(3)}+R_3(|\xi|)W^{(3)}=0,
\end{align*}
where $\Lambda_2(|\xi|)=B_1^{(1)}(|\xi|)N_2(|\xi|)=\ml{O}(|\xi|^{2-2\rho})$ and
\begin{align*}
R_3(|\xi|)=-N_2(|\xi|)T_2^{-1}B_1^{(1)}(|\xi|)N_2(|\xi|)+T_2^{-1}B_0^{(1)}(|\xi|)T_2=\ml{O}\left(|\xi|^{\min\{3-4\rho;2\theta\}}\right).
\end{align*}
Up to now, we have derived pairwise distinct eigenvalues and $R_3(|\xi|)=\ml{O}\left(|\xi|^{\min\{3-4\rho;2\theta\}}\right)$.

Summarizing above diagonalization procedure, according to \cite{Jachmann2009} we obtain the next proposition, which tells us the asymptotic behavior of eigenvalues and representation of solutions.

\begin{prop}\label{Prop.INT}
	The eigenvalues $\lambda_{j}=\lambda_{j}(|\xi|)$ of the coefficient matrix 
	\begin{align*}
	B(|\xi|;\rho,\theta)=\frac{1}{2}\left(|\xi|^{2\rho}+|\xi|^{2\theta}\right)B_0+i|\xi|B_1
	\end{align*}
	 from \eqref{Eq.FirstOrderSystem} behave for $|\xi|<\varepsilon\ll1$ as
	\begin{itemize}
		\item if $\rho+\theta<1$, then
		\begin{equation*}
		\begin{aligned}
		&\lambda_{1}(|\xi|)=b^2|\xi|^{2-2\rho}+\ml{O}\left(|\xi|^{1+2\theta-2\rho}\right),\\
		&\lambda_{2}(|\xi|)=a^2|\xi|^{2-2\rho}+\ml{O}\left(|\xi|^{1+2\theta-2\rho}\right),\\ &\lambda_{3}(|\xi|)=|\xi|^{2\rho}+|\xi|^{2\theta}-b^2|\xi|^{2-2\rho}+\ml{O}\left(|\xi|^{1+2\theta-2\rho}\right),\\
		&\lambda_{4}(|\xi|)=|\xi|^{2\rho}+|\xi|^{2\theta}-a^2|\xi|^{2-2\rho}+\ml{O}\left(|\xi|^{1+2\theta-2\rho}\right);
		\end{aligned}
		\end{equation*}
		\item if $\rho+\theta=1$, then
		\begin{equation*}
		\begin{aligned}
		&\lambda_{1}(|\xi|)=b^2|\xi|^{2-2\rho}+\ml{O}\left(|\xi|^{3-4\rho}\right),\\
		&\lambda_{2}(|\xi|)=a^2|\xi|^{2-2\rho}+\ml{O}\left(|\xi|^{3-4\rho}\right),\\ &\lambda_{3}(|\xi|)=|\xi|^{2\rho}+\left(1-b^2\right)|\xi|^{2-2\rho}+\ml{O}\left(|\xi|^{3-4\rho}\right),\\
		& \lambda_{4}(|\xi|)=|\xi|^{2\rho}+\left(1-a^2\right)|\xi|^{2-2\rho}+\ml{O}\left(|\xi|^{3-4\rho}\right);
		\end{aligned}
		\end{equation*}
		\item if $\rho+\theta>1$, then
		\begin{equation*}
		\begin{aligned}
		&\lambda_{1}(|\xi|)=b^2|\xi|^{2-2\rho}+\ml{O}\left(|\xi|^{\min\{3-4\rho;2\theta\}}\right),\\
		&\lambda_{2}(|\xi|)=a^2|\xi|^{2-2\rho}+\ml{O}\left(|\xi|^{\min\{3-4\rho;2\theta\}}\right),\\
		& \lambda_{3}(|\xi|)=|\xi|^{2\rho}-b^2|\xi|^{2-2\rho}+\ml{O}\left(|\xi|^{\min\{3-4\rho;2\theta\}}\right),\\
		& \lambda_{4}(|\xi|)=|\xi|^{2\rho}-a^2|\xi|^{2-2\rho}+\ml{O}\left(|\xi|^{\min\{3-4\rho;2\theta\}}\right).
		\end{aligned}
		\end{equation*}
		Furthermore, the solution to the Cauchy problem \eqref{Eq.FirstOrderSystem} has in $\ml{Z}_{\intt}(\varepsilon)$ the representation
		\begin{align*}
		W(t,\xi)=T_{\intt}^{-1}(|\xi|)\diag\left(e^{-\lambda_j(|\xi|)t}\right)_{j=1}^4T_{\intt}(|\xi|)W_0(\xi),
		\end{align*}
		where $T_{\intt}(|\xi|)=(I_{4\times4}+N_2(|\xi|))^{-1}T_1^{-1}$ with a matrix $N_2(|\xi|)=\ml{O}\left(|\xi|^{1-2\rho}\right)$ for $|\xi|\rightarrow0$. Here the matrix  $T_1$ is defined in \eqref{Matrix.T1}.
	\end{itemize}
\end{prop}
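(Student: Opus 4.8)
The plan is to make rigorous the formal diagonalization carried out above and to read off the eigenvalue expansions from the triangular/diagonal normal forms obtained in the three cases. First I would organize the computation as a sequence of conjugations $W \mapsto W^{(k)} := T_k^{-1} W^{(k-1)}$, exactly as performed in the excerpt: the matrix $B(|\xi|;\rho,\theta)$ is transformed by $T_1$ into $\Lambda_1(|\xi|) + B_0^{(1)}(|\xi|) + B_1^{(1)}(|\xi|)$, and then by $T_2 = I_{4\times 4} + N_2(|\xi|)$ into $\Lambda_1(|\xi|) + R_2(|\xi|)$. The key algebraic identity I would use repeatedly is that for $T = I + N$ with $N$ small, $T^{-1} B_1^{(1)} N = B_1^{(1)} N - N T^{-1} B_1^{(1)} N$ and $T^{-1} B_0^{(1)} T = B_0^{(1)} + T^{-1}[B_0^{(1)}, N]$, which is just the Neumann series $T^{-1} = I - N + N^2 - \cdots$ truncated appropriately, valid because $N_2(|\xi|) = \mathcal{O}(|\xi|^{1-2\rho}) \to 0$ as $|\xi| \to 0$ (here $1 - 2\rho > 0$). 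I would verify the single matrix product $B_1^{(1)}(|\xi|) N_2(|\xi|) = |\xi|^{2-2\rho}\diag(b^2,a^2,-b^2,-a^2)$ and the commutator $[\Lambda_2, N_2]$, which are the only genuinely computational ingredients.

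Next, I would treat the three cases $\rho + \theta < 1$, $\rho + \theta = 1$, $\rho + \theta > 1$ separately, tracking which of the two competing remainder terms $T_2^{-1} B_1^{(1)} N_2 = \mathcal{O}(|\xi|^{2-2\rho})$ and $T_2^{-1} B_0^{(1)} T_2 = \mathcal{O}(|\xi|^{2\theta})$ dominates — this is decided precisely by the sign of $2-2\rho - 2\theta = 2(1-\rho-\theta)$. In each case the diagonalization produces a matrix of the form $D(|\xi|) + R(|\xi|)$ where $D(|\xi|) = \diag(\mu_1,\mu_2,\mu_3,\mu_4)$ has the claimed leading terms and $R(|\xi|)$ is a remainder of strictly higher order: $\mathcal{O}(|\xi|^{1+2\theta-2\rho})$ when $\rho+\theta<1$, $\mathcal{O}(|\xi|^{3-4\rho})$ when $\rho+\theta=1$, and $\mathcal{O}(|\xi|^{\min\{3-4\rho;2\theta\}})$ when $\rho+\theta>1$. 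The crucial point needed to pass from "diagonal up to a higher-order remainder" to "the eigenvalues are as claimed" is that the entries of $D(|\xi|)$ are \emph{pairwise distinct} for $|\xi| < \varepsilon$ with spectral gaps comparable to $|\xi|^{2-2\rho}$ (using $b > a > 0$ to separate $\lambda_1,\lambda_2$ and $\lambda_3,\lambda_4$, and the scale separation $|\xi|^{2\rho} \gg |\xi|^{2-2\rho}$ — valid since $\rho < 1/2$ forces $2\rho < 2 - 2\rho$ — to separate the pair $\{\lambda_1,\lambda_2\}$ from $\{\lambda_3,\lambda_4\}$). Given this gap, one more diagonalization step (or an appeal to the standard perturbation lemma for matrices with separated spectrum, as in \cite{Jachmann2009}) absorbs $R(|\xi|)$ into the diagonal without changing the stated leading asymptotics, since $R(|\xi|)$ divided by the spectral gap is still $o(1)$ and contributes only at an order below the error terms already written.

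Finally, for the representation formula I would assemble the transformations: in every case the effective diagonalizing transform is $T_{\intt}(|\xi|) = (I_{4\times 4} + N_2(|\xi|))^{-1} T_1^{-1}$ (the steps labelled $W^{(3)} := W^{(2)}$, $W^{(4)} := W^{(3)}$ introduce no further conjugation — they only relabel which part of the coefficient matrix is called "diagonal"), so the solution of the ODE system $W^{(2)}_t + (D(|\xi|) + R(|\xi|)) W^{(2)} = 0$ is $W^{(2)}(t,\xi) = \exp(-(D(|\xi|)+R(|\xi|))t) W^{(2)}_0$; conjugating back through the final perturbative step and through $T_{\intt}(|\xi|)$ gives $W(t,\xi) = T_{\intt}^{-1}(|\xi|) \diag(e^{-\lambda_j(|\xi|)t})_{j=1}^4 T_{\intt}(|\xi|) W_0(\xi)$, with $N_2(|\xi|) = \mathcal{O}(|\xi|^{1-2\rho})$ as recorded. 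The main obstacle I anticipate is bookkeeping discipline rather than any deep difficulty: one must be careful that every remainder estimate $\mathcal{O}(|\xi|^{\cdot})$ is genuinely of higher order than the last eigenvalue correction being claimed in each case (in particular that $1+2\theta-2\rho > 2-2\rho$, i.e. $2\theta>1$, and $3-4\rho > 2-2\rho$, i.e. $\rho<1/2$, both of which hold under the standing hypotheses), and that the spectral-gap condition needed for the Jachmann-type perturbation argument is uniform on $\ml{Z}_{\intt}(\varepsilon)$ after shrinking $\varepsilon$ if necessary — this is where the constraints $0 \le \rho < 1/2 < \theta \le 1$ and $b > a > 0$ are all used essentially.
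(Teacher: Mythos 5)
Your proposal is correct and takes essentially the same route as the paper: the identical conjugations by $T_1$ and $T_2=I_{4\times4}+N_2(|\xi|)$, the same three-case tracking of which remainder ($\mathcal{O}(|\xi|^{2-2\rho})$ versus $\mathcal{O}(|\xi|^{2\theta})$) dominates, and the same concluding appeal to the standard perturbation/diagonalization result for matrices with pairwise distinct diagonal entries, which is precisely how the paper finishes (it simply cites \cite{Jachmann2009} after the computations of Subsection \ref{SubSec.SmallFreq}). The only point you gloss over — that the final perturbative factor can be absorbed so that the diagonalizer is still $(I_{4\times4}+N_2(|\xi|))^{-1}T_1^{-1}$ with a correction of order $|\xi|^{1-2\rho}$ — is treated with the same brevity in the paper itself.
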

\subsection{Treatment for large frequencies}\label{SubSec.LargeFreq}
We observe that the symmetric of the system \eqref{Eq.FirstOrderSystem} with respective to the parameters $\rho$ and $\theta$. Thus, by similar procedure we can obtain pairwise distinct eigenvalues. Before stating our result for large frequencies, we define
\begin{align*}
T_3:=I_{4\times4}+N_3(|\xi|) \quad\text{with}\quad N_3(|\xi|)=i|\xi|^{1-2\theta}\left(
{\begin{array}{*{20}c}
	0 & 0  & b & 0 \\
	0  & 0 &  0 & a\\
	-b & 0  & 0 & 0 \\
	0 & -a & 0  & 0\\
	\end{array}}
\right).
\end{align*}

Then, following the similar procedure as the case for small frequencies and according to the thesis \cite{Jachmann2009} we obtain the next proposition.
\begin{prop}\label{Prop.EXT}
	The eigenvalues $\mu_{j}=\mu_{j}(|\xi|)$ of the coefficient matrix
	\begin{align*}
	B(|\xi|;\rho,\theta)=\frac{1}{2}\left(|\xi|^{2\rho}+|\xi|^{2\theta}\right)B_0+i|\xi|B_1
	\end{align*}
	 from \eqref{Eq.FirstOrderSystem} behave for $|\xi|> N\gg1$ as
	\begin{itemize}
		\item if $\rho+\theta<1$, then
		\begin{equation*}
		\begin{aligned}
		&\mu_{1}(|\xi|)=b^2|\xi|^{2-2\theta}+\ml{O}\left(|\xi|^{\min\{3-4\theta;2\rho\}}\right),\\
		&\mu_{2}(|\xi|)=a^2|\xi|^{2-2\theta}+\ml{O}\left(|\xi|^{\min\{3-4\theta;2\rho\}}\right),\\
		& \mu_{3}(|\xi|)=|\xi|^{2\theta}-b^2|\xi|^{2-2\theta}+\ml{O}\left(|\xi|^{\min\{3-4\theta;2\rho\}}\right),\\
		& \mu_{4}(|\xi|)=|\xi|^{2\theta}-a^2|\xi|^{2-2\theta}+\ml{O}\left(|\xi|^{\min\{3-4\theta;2\rho\}}\right).
		\end{aligned}
		\end{equation*}
		\item if $\rho+\theta=1$, then
		\begin{equation*}
		\begin{aligned}
		&\mu_{1}(|\xi|)=b^2|\xi|^{2-2\theta}+\ml{O}\left(|\xi|^{3-4\theta}\right),\\
		&\mu_{2}(|\xi|)=a^2|\xi|^{2-2\theta}+\ml{O}\left(|\xi|^{3-4\theta}\right),\\ &\mu_{3}(|\xi|)=|\xi|^{2\theta}+\left(1-b^2\right)|\xi|^{2-2\theta}+\ml{O}\left(|\xi|^{3-4\theta}\right),\\
		& \mu_{4}(|\xi|)=|\xi|^{2\theta}+\left(1-a^2\right)|\xi|^{2-2\theta}+\ml{O}\left(|\xi|^{3-4\theta}\right);
		\end{aligned}
		\end{equation*}
		\item if $\rho+\theta>1$, then
		\begin{equation*}
		\begin{aligned}
		&\mu_{1}(|\xi|)=b^2|\xi|^{2-2\theta}+\ml{O}\left(|\xi|^{1+2\rho-2\theta}\right),\\
		&\mu_{2}(|\xi|)=a^2|\xi|^{2-2\theta}+\ml{O}\left(|\xi|^{1+2\rho-2\theta}\right),\\ &\mu_{3}(|\xi|)=|\xi|^{2\theta}+|\xi|^{2\rho}-b^2|\xi|^{2-2\theta}+\ml{O}\left(|\xi|^{1+2\rho-2\theta}\right),\\
		& \mu_{4}(|\xi|)=|\xi|^{2\theta}+|\xi|^{2\rho}-a^2|\xi|^{2-2\theta}+\ml{O}\left(|\xi|^{1+2\rho-2\theta}\right);
		\end{aligned}
		\end{equation*}
		Furthermore, the solution to the Cauchy problem \eqref{Eq.FirstOrderSystem} has in $\ml{Z}_{\extt}(N)$ the representation
		\begin{align*}
		W(t,\xi)=T_{\extt}^{-1}(|\xi|)\diag\left(e^{-\mu_j(|\xi|)t}\right)_{j=1}^4T_{\extt}(|\xi|)W_0(\xi),
		\end{align*}
		where $T_{\extt}(|\xi|)=(I_{4\times4}+N_3(|\xi|))^{-1}T_1^{-1}$ with a matrix $N_3(|\xi|)=\ml{O}\left(|\xi|^{1-2\theta}\right)$ for $|\xi|\rightarrow\infty$. Here the matrix  $T_1$ is defined in \eqref{Matrix.T1}.
	\end{itemize}
\end{prop}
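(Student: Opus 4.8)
The plan is to rerun the WKB/multistep diagonalization of Subsection~\ref{SubSec.SmallFreq}, exploiting the formal symmetry of \eqref{Eq.FirstOrderSystem} under $\rho\leftrightarrow\theta$: for $|\xi|>N\gg1$ the power $|\xi|^{2\theta}$ is the largest among $|\xi|^{2\theta},\,|\xi|,\,|\xi|^{2\rho}$ (because $2\rho<1<2\theta$), so now $\tfrac12|\xi|^{2\theta}B_0$ plays the role that $\tfrac12|\xi|^{2\rho}B_0$ played for small frequencies. Accordingly, in the first step I would set $W^{(1)}:=T_1^{-1}W$ with the same $T_1$ from \eqref{Matrix.T1}, diagonalizing $B_0$ and obtaining $W^{(1)}_t+\Lambda_1(|\xi|)W^{(1)}+(B_0^{(1)}(|\xi|)+B_1^{(1)}(|\xi|))W^{(1)}=0$ with $\Lambda_1(|\xi|)=|\xi|^{2\theta}\diag(0,0,1,1)$, $B_0^{(1)}(|\xi|)=|\xi|^{2\rho}\diag(0,0,1,1)=\ml{O}(|\xi|^{2\rho})$ and $B_1^{(1)}(|\xi|)=i|\xi|T_1^{-1}B_1T_1=\ml{O}(|\xi|)$. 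In the second step I would introduce $W^{(2)}:=T_3^{-1}W^{(1)}$ with $T_3=I_{4\times4}+N_3(|\xi|)$ and $N_3(|\xi|)=\ml{O}(|\xi|^{1-2\theta})\to0$ (the matrix $N_3$ written before the statement), which is designed precisely to cancel the off-block-diagonal part of $i|\xi|T_1^{-1}B_1T_1$; this is legitimate since the two $2\times2$ blocks of $\Lambda_1/|\xi|^{2\theta}=\diag(0,0,1,1)$ carry the disjoint spectra $\{0\}$ and $\{1\}$, so the corresponding commutator equation is solvable and forces the off-diagonal entries of $N_3$ to scale like $|\xi|^{1-2\theta}$. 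The outcome is $W^{(2)}_t+\Lambda_1(|\xi|)W^{(2)}+R_2(|\xi|)W^{(2)}=0$ with $R_2=T_3^{-1}B_1^{(1)}N_3+T_3^{-1}B_0^{(1)}T_3=\ml{O}(|\xi|^{2-2\theta})+\ml{O}(|\xi|^{2\rho})$.

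From here I would split into the three cases as in \emph{Case 2.1.1}--\emph{Case 2.1.3}, but note that for $|\xi|\to\infty$ the comparison of the two remainder exponents is $2-2\theta\gtrless2\rho\iff\rho+\theta\lessgtr1$. If $\rho+\theta<1$, then $T_3^{-1}B_1^{(1)}N_3$ is dominant; writing it as $B_1^{(1)}N_3-N_3T_3^{-1}B_1^{(1)}N_3$ and using $B_1^{(1)}N_3=|\xi|^{2-2\theta}\diag(b^2,a^2,-b^2,-a^2)$ I would peel off $\Lambda_2:=B_1^{(1)}N_3$, after which $W^{(3)}:=W^{(2)}$ and the four eigenvalues are already pairwise distinct for large $|\xi|$, the remaining perturbation (containing the unpeeled $B_0^{(1)}=\ml{O}(|\xi|^{2\rho})$ together with the cubic-in-$N_3$ and commutator terms of order $|\xi|^{3-4\theta}$ and $|\xi|^{1+2\rho-2\theta}$) having the claimed order. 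If $\rho+\theta=1$ the two contributions have the same size, so I would combine them into $\Lambda_2:=B_1^{(1)}N_3+B_0^{(1)}=|\xi|^{2-2\theta}\diag(b^2,a^2,1-b^2,1-a^2)$ and read off the residual order $\ml{O}(|\xi|^{3-4\theta})$. If $\rho+\theta>1$ the weaker damping $B_0^{(1)}=|\xi|^{2\rho}\diag(0,0,1,1)$ dominates, so I would peel off $\Lambda_2:=B_0^{(1)}$ first; since its upper $2\times2$ block vanishes the first two eigenvalues are still degenerate, so one more (identity) step peeling off $\Lambda_3:=B_1^{(1)}N_3=|\xi|^{2-2\theta}\diag(b^2,a^2,-b^2,-a^2)$ is needed to separate them, with the surviving remainder dominated by the commutator $[B_0^{(1)},N_3]=\ml{O}(|\xi|^{1+2\rho-2\theta})$. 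In each case the diagonal entries $\Lambda_1+\Lambda_2(+\Lambda_3)$ give the stated expansions of $\mu_1,\dots,\mu_4$.

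Finally, once the system is in the form "diagonal $+$ a remainder that is uniformly controlled and $\to0$ in the relevant (Lipschitz/summable) sense on $\ml{Z}_{\extt}(N)$", the standard perturbation theory for such first-order systems (Jachmann's thesis \cite{Jachmann2009}) yields the fundamental solution $W(t,\xi)=T_{\extt}^{-1}(|\xi|)\diag(e^{-\mu_j(|\xi|)t})_{j=1}^4T_{\extt}(|\xi|)W_0(\xi)$ with $T_{\extt}(|\xi|)=(I_{4\times4}+N_3(|\xi|))^{-1}T_1^{-1}$, since every transformation after the second step is the identity.

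The step I expect to be the main obstacle is the bookkeeping in the case analysis: one must check in each of the three regimes that the peeled-off diagonal matrices actually produce \emph{pairwise distinct} eigenvalues for all $|\xi|>N$ (using $a\ne b$ within each block and $2\theta>2-2\theta$, i.e. $\theta>1/2$, across blocks), and that the correct ordering of the exponents $2\theta,\,2-2\theta,\,2\rho$ is respected when deciding which term to diagonalize next — an ordering that, unlike in the small-frequency analysis, forces the additional diagonalization step precisely when $\rho+\theta>1$.
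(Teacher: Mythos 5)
Your overall route is exactly the paper's: the author simply invokes the $\rho\leftrightarrow\theta$ symmetry of \eqref{Eq.FirstOrderSystem}, repeats the small-frequency diagonalization with $T_1$ and $T_3=I_{4\times4}+N_3(|\xi|)$, and cites \cite{Jachmann2009} for the representation; your step-by-step mirror (cancellation of $B_1^{(1)}$ via $[\Lambda_1,N_3]$, the case distinction driven by $2-2\theta\gtrless 2\rho$, the extra identity step when $\rho+\theta>1$) fills in those details correctly for the cases $\rho+\theta=1$ and $\rho+\theta>1$.

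There is, however, a genuine slip in your case $\rho+\theta<1$, at the point where you assert that the leftover perturbation "has the claimed order". The terms you list have orders $|\xi|^{2\rho}$, $|\xi|^{3-4\theta}$ and $|\xi|^{1+2\rho-2\theta}$, and for $|\xi|>N\gg1$ a sum of such terms is $\ml{O}\bigl(|\xi|^{\max\{3-4\theta;\,2\rho\}}\bigr)$, not $\ml{O}\bigl(|\xi|^{\min\{3-4\theta;\,2\rho\}}\bigr)$: you have imported the small-frequency convention, where the \emph{smallest} exponent is the worst one, into the large-frequency zone, where the \emph{largest} exponent dominates. No refinement of the scheme can recover the min-bound, because it is not true for the exact eigenvalues: the characteristic polynomial factors as $\bigl(\lambda^2-s\lambda+a^2|\xi|^2\bigr)\bigl(\lambda^2-s\lambda+b^2|\xi|^2\bigr)$ with $s=|\xi|^{2\rho}+|\xi|^{2\theta}$, so for instance $\mu_1=\tfrac12\bigl(s-\sqrt{s^2-4b^2|\xi|^2}\,\bigr)=b^2|\xi|^{2-2\theta}+b^4|\xi|^{4-6\theta}-b^2|\xi|^{2+2\rho-4\theta}+\cdots$ and $\mu_3=s-\mu_1$ contains the term $+|\xi|^{2\rho}$; taking e.g. $\rho=0$ and $\theta$ close to $1/2$, these corrections exceed $|\xi|^{\min\{3-4\theta;2\rho\}}$. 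In other words, your computation actually proves the bound with $\max\{3-4\theta;2\rho\}$ — the faithful mirror of $\min\{3-4\rho;2\theta\}$ in Proposition \ref{Prop.INT} — and the exponent written in the first bullet of the statement cannot be obtained; you should either prove and state the max-version or flag the discrepancy, rather than declare agreement with the claimed order. (The paper's own two-line symmetry argument does not address this point either.)
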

\subsection{Treatment for bounded frequencies}\label{SubSec.BoundedFreq.}
Finally, we only need to derive an exponential decay of solutions to \eqref{Eq.FirstOrderSystem} for bounded frequencies to guarantee the exponential stability of solutions.
\begin{prop}\label{Prop.BDD}The solution $W=W(t,\xi)$ to the Cauchy problem \eqref{Eq.FirstOrderSystem} with $0\leq \rho<1/2<\theta\leq1$ fulfills the following exponential decay estimate:
	\begin{align*}
	|W(t,\xi)|\lesssim e^{-ct}|W_0(\xi)|,
	\end{align*}
	for $(t,\xi)\in(0,\infty)\times \ml{Z}_{\text{bdd}}(\varepsilon,N)$, where $c$ is a positive constant.
\end{prop}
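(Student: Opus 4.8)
The plan is to establish the exponential decay on the compact frequency annulus $\ml{Z}_{\text{bdd}}(\varepsilon,N)$ by a contradiction/compactness argument, as announced in the text before the statement. First I would observe that on this zone the system \eqref{Eq.FirstOrderSystem} has coefficient matrix $B(|\xi|;\rho,\theta)=\frac{1}{2}(|\xi|^{2\rho}+|\xi|^{2\theta})B_0+i|\xi|B_1$ depending continuously (indeed smoothly) on $\xi$, and that $|\xi|^{2\rho}+|\xi|^{2\theta}$ is bounded above and below by positive constants on $\varepsilon\leq|\xi|\leq N$. Since the solution is $W(t,\xi)=e^{-tB(|\xi|;\rho,\theta)}W_0(\xi)$, it suffices to show that there exists $c>0$ with $\mathrm{Re}\,\lambda\geq c$ for every eigenvalue $\lambda$ of $B(|\xi|;\rho,\theta)$ and every $\xi\in\ml{Z}_{\text{bdd}}(\varepsilon,N)$; combined with the continuous dependence of the (generalized) eigenprojections this yields the uniform estimate $|e^{-tB}|\lesssim e^{-ct}$.

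Next I would reduce the spectral claim to a Lyapunov-type identity. Going back to the diagonalized scalar form, or equivalently pairing the original Fourier-space equation \eqref{Eq.FourierDoublyDiss} with $\hat u_t$, one gets for the energy $E(t,\xi):=|\hat u_t|^2+|\xi|^2\langle A(\eta)\hat u,\hat u\rangle$ the identity $\frac{d}{dt}E(t,\xi)=-2(|\xi|^{2\rho}+|\xi|^{2\theta})|\hat u_t|^2\leq 0$. Because $A(\eta)$ is symmetric with eigenvalues $a^2$ and $b^2$ (uniformly positive), $E$ is equivalent to $|\hat u_t|^2+|\xi|^2|\hat u|^2$ uniformly on the annulus. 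The point is then that $E$ cannot merely be non-increasing but must decay exponentially: if $\hat u_t\equiv 0$ on a time interval then $\hat u\equiv\text{const}$ and the equation forces $|\xi|^2A(\eta)\hat u=0$, hence $\hat u=0$ since $|\xi|\geq\varepsilon$. This unique-continuation/observability feature is what rules out a purely imaginary eigenvalue.

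To turn this into the stated bound I would run the contradiction argument explicitly: suppose no such $c$ exists; then there are sequences $\xi^{(k)}\in\ml{Z}_{\text{bdd}}(\varepsilon,N)$ and eigenvalues $\lambda_k$ of $B(|\xi^{(k)}|;\rho,\theta)$ with $\mathrm{Re}\,\lambda_k\to 0$. By compactness of the annulus pass to a subsequence with $\xi^{(k)}\to\xi^\ast$, $|\xi^\ast|\in[\varepsilon,N]$, and (eigenvalues of a continuous matrix family being bounded and depending continuously as a set) $\lambda_k\to\lambda^\ast=i\tau$ with $\tau\in\mb{R}$, an eigenvalue of $B(|\xi^\ast|;\rho,\theta)$. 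Let $V^\ast\neq 0$ be a corresponding eigenvector; then $W(t,\xi^\ast)=e^{-i\tau t}V^\ast$ solves \eqref{Eq.FirstOrderSystem} with constant modulus, so the corresponding $\hat u$ has $E(t,\xi^\ast)$ constant, forcing $\hat u_t\equiv 0$ via the dissipation identity, and then $\hat u\equiv 0$ as above, whence $V^\ast=0$ — a contradiction. This gives the uniform spectral gap $\mathrm{Re}\,\lambda\geq c>0$, and then $|W(t,\xi)|=|e^{-tB(|\xi|;\rho,\theta)}W_0(\xi)|\lesssim e^{-ct}|W_0(\xi)|$ follows, with the implicit constant uniform because it depends only on $\sup_{\ml{Z}_{\text{bdd}}}\|B(|\xi|;\rho,\theta)\|$ and the gap $c$ (e.g. via a resolvent/Dunford-integral bound on the compact set, or by the equivalence of $E$ with $|W|^2$ together with a Grönwall argument improved by the observability inequality).

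The main obstacle I anticipate is the second step: making rigorous that non-decay forces a purely imaginary eigenvalue and then excluding it. One must be careful that $B$ is not self-adjoint, so eigenvalues can in principle cluster or the eigenprojections blow up; the cleanest route is to avoid spectral perturbation theory and argue entirely at the level of the energy $E(t,\xi)$ — showing directly that $E(t,\xi)\leq C e^{-ct}E(0,\xi)$ uniformly on $\ml{Z}_{\text{bdd}}(\varepsilon,N)$ by a contradiction/compactness argument applied to rescaled solutions, using that the only obstruction is a time-periodic (hence $\hat u_t\equiv 0$) solution, which is incompatible with $|\xi|\geq\varepsilon>0$. Passing to the limit in the nonlinear-in-$|\xi|$ coefficient $|\xi|^{2\rho}+|\xi|^{2\theta}$ is harmless since it converges and stays bounded below, so no additional difficulty arises from the fractional exponents here.
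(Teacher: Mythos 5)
Your proposal is correct and shares the paper's overall architecture -- exclude spectrum of $B(|\xi|;\rho,\theta)$ on the imaginary axis, then use compactness of $\ml{Z}_{\text{bdd}}(\varepsilon,N)$ and continuity of the eigenvalues to get a uniform spectral gap and hence $|W(t,\xi)|\lesssim e^{-ct}|W_0(\xi)|$ -- but the key exclusion step is carried out by a genuinely different device. The paper computes the quartic characteristic polynomial explicitly, inserts a putative eigenvalue $\lambda=id$ with $d\in\mb{R}\setminus\{0\}$, separates real and imaginary parts, and after elimination arrives at $-(b^2-a^2)^2|\xi|^2=2(a^2+b^2)\left(|\xi|^{2\rho}+|\xi|^{2\theta}\right)^2$, which is impossible since the right-hand side is strictly positive on the bounded zone (and $\lambda=0$ is excluded because the constant term $a^2b^2|\xi|^4$ does not vanish there). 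You instead rule out imaginary-axis eigenvalues via the dissipation identity $\frac{d}{dt}E=-2\left(|\xi|^{2\rho}+|\xi|^{2\theta}\right)|\hat u_t|^2$ for $E=|\hat u_t|^2+|\xi|^2\langle A(\eta)\hat u,\hat u\rangle$ together with the unique-continuation observation that $\hat u_t\equiv0$ forces $|\xi|^2A(\eta)\hat u=0$, hence $\hat u=0$ for $|\xi|\geq\varepsilon$; this avoids the determinant computation, is more structural, and -- since in fact $|W|^2=2E$ (use that $M(\eta)$ is orthogonal and $M A M=\diag(b^2,a^2)$) -- it also supplies the fact, left implicit in both arguments, that no eigenvalue has negative real part. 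Two small points to tighten: from "no admissible $c$ exists" you only get eigenvalues with $\mathrm{Re}\,\lambda_k<1/k$, not $\mathrm{Re}\,\lambda_k\to0$, so you must explicitly invoke this nonnegativity (monotonicity of $E$, equivalently positive semidefiniteness of the Hermitian part $\tfrac12(|\xi|^{2\rho}+|\xi|^{2\theta})B_0$) before concluding $\lambda^\ast=i\tau$; and you should note that on the bounded zone the map $W\leftrightarrow(\hat u,\hat u_t)$ is a linear bijection, so an eigen-solution of \eqref{Eq.FirstOrderSystem} really does yield a solution of \eqref{Eq.FourierDoublyDiss} to which the energy identity applies. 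With these sentences added your argument is complete; the paper's computation buys an explicit, self-contained algebraic contradiction, while your route generalizes more easily to situations where the characteristic polynomial is unwieldy.
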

\begin{proof}
Let us recall that
\begin{align*}
B(|\xi|;\rho,\theta)=\frac{1}{2}\left(|\xi|^{2\rho}+|\xi|^{2\theta}\right)B_0+i|\xi|B_1.
\end{align*}
It is clear that the eigenvalues of $B(|\xi|;\rho,\theta)$ satisfy
\begin{align*}
0&=\det(B(|\xi|;\rho,\theta)-\lambda I_{4\times 4})\\
&=\lambda^4-2\left(|\xi|^{2\rho}+|\xi|^{2\theta}\right)\lambda^3+\left(\left(|\xi|^{2\rho}+|\xi|^{2\theta}\right)^2+\left(a^2+b^2\right)|\xi|^2\right)\lambda^2\\
&\quad-\left(a^2+b^2\right)|\xi|^2\left(|\xi|^{2\rho}+|\xi|^{2\theta}\right)\lambda+a^2b^2|\xi|^4.
\end{align*}
Now, we assume there exists an eigenvalue $\lambda=id$ with $d\in\mb{R}\backslash\{0\}$. Therefore, the real number $d$ should satisfy the equations
\begin{equation}\label{MIDDLE.EQ}
\left\{
\begin{aligned}
&d^4-\left(\left(|\xi|^{2\rho}+|\xi|^{2\theta}\right)^2+\left(a^2+b^2\right)|\xi|^2\right)d^2+a^2b^2|\xi|^4=0,\\
&id\left(|\xi|^{2\rho}+|\xi|^{2\theta}\right)\left(2d^2-\left(a^2+b^2\right)|\xi|^2\right)=0.
\end{aligned}
\right.
\end{equation}
Due to the facts that $d\neq0$ and $\xi\in \ml{Z}_{\text{bdd}}(\varepsilon,N)$, the equations \eqref{MIDDLE.EQ} leads to
\begin{align*}
-\left(b^2-a^2\right)^2|\xi|^2=2\left(a^2+b^2\right)\left(|\xi|^{2\rho}+|\xi|^{2\theta}\right)^2.
\end{align*}
From our assumption $b>a>0$, we immediately find a contradiction. Thus, there not exists pure imaginary eigenvalue of $B(|\xi|;\rho,\theta)$ for any $0\leq\rho<1/2<\theta\leq 1$ and $\xi\in \ml{Z}_{\text{bdd}}(\varepsilon,N)$. Lastly, by using the compactness of the bounded zone $\ml{Z}_{\text{bdd}}(\varepsilon,N)$ and the continuity of the eigenvalues, the proof is complete.
\end{proof}

\section{Energy estimates}\label{Sec.EnergyEstimate}
The aim of the section is to study the dissipative structure and sharp energy estimates to doubly dissipative elastic waves, where initial data belongs to Bessel potential space with additional $L^m$ regularity ($m\in[1,2]$) or with additional weighted $L^1$ regularity.

The crucial point of sharp energy estimates is to derive the sharp pointwise estimate. By summarizing the results in Propositions \ref{Prop.INT}, \ref{Prop.EXT} and \ref{Prop.BDD}, we obtain the result on the sharp pointwise estimate of solutions to \eqref{Eq.FirstOrderSystem}.
\begin{prop}\label{Prop.PointwiseEst} The solution $W=W(t,\xi)$ to the Cauchy problem \eqref{Eq.FirstOrderSystem} with $0\leq \rho<1/2<\theta\leq1$ satisfies the following pointwise estimates for any $\xi\in\mb{R}^2$ and $t\geq0$:
\begin{align*}
|W(t,\xi)|\lesssim e^{-c\eta(|\xi|)t}|W_0(\xi)|,
\end{align*}
where $\eta(|\xi|):=\frac{|\xi|^{2-2\rho}}{1+|\xi|^{2\theta-2\rho}}$ and $c$ is positive constant.
\end{prop}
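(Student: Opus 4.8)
The plan is to combine the three regional representations and decay estimates from Propositions~\ref{Prop.INT}, \ref{Prop.EXT} and \ref{Prop.BDD} into a single pointwise bound valid for all $\xi\in\mb{R}^2$, and then to check that the piecewise decay rates obtained in the three frequency zones are all captured by the unified exponent $c\eta(|\xi|)$ with $\eta(|\xi|)=\frac{|\xi|^{2-2\rho}}{1+|\xi|^{2\theta-2\rho}}$. First I would observe that, since $0\leq 2\rho<1<2\theta\leq2$, for small $|\xi|$ one has $|\xi|^{2\theta-2\rho}\to 0$, so $\eta(|\xi|)\asymp |\xi|^{2-2\rho}$; while for large $|\xi|$ one has $|\xi|^{2\theta-2\rho}\to\infty$, so $\eta(|\xi|)\asymp |\xi|^{2-2\rho}/|\xi|^{2\theta-2\rho}=|\xi|^{2-2\theta}$; and on any bounded annulus $\varepsilon\leq|\xi|\leq N$ one has $\eta(|\xi|)\asymp 1$. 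Thus the claimed exponent automatically interpolates between the three regimes, and it suffices to verify the estimate separately in $\ml{Z}_{\intt}(\varepsilon)$, $\ml{Z}_{\extt}(N)$ and $\ml{Z}_{\text{bdd}}(\varepsilon,N)$.

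For the interior zone $\xi\in\ml{Z}_{\intt}(\varepsilon)$, I would use the representation $W(t,\xi)=T_{\intt}^{-1}(|\xi|)\diag(e^{-\lambda_j(|\xi|)t})_{j=1}^4 T_{\intt}(|\xi|)W_0(\xi)$ from Proposition~\ref{Prop.INT}. Since $T_{\intt}(|\xi|)=(I+N_2(|\xi|))^{-1}T_1^{-1}$ with $N_2(|\xi|)=\ml{O}(|\xi|^{1-2\rho})\to 0$, both $T_{\intt}(|\xi|)$ and its inverse are uniformly bounded for $\varepsilon$ small; hence $|W(t,\xi)|\lesssim \max_j e^{-(\operatorname{Re}\lambda_j(|\xi|))t}|W_0(\xi)|$. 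From the eigenvalue asymptotics in all three cases ($\rho+\theta<1$, $=1$, $>1$), the smallest real part among $\lambda_1,\dots,\lambda_4$ is, up to a constant, $a^2|\xi|^{2-2\rho}$ (the eigenvalues $\lambda_3,\lambda_4$ have real part $\sim|\xi|^{2\rho}$ or larger, which dominates $|\xi|^{2-2\rho}$ only for small frequencies but is never smaller than $a^2|\xi|^{2-2\rho}$ there); choosing $\varepsilon$ small enough absorbs the error terms, giving $\operatorname{Re}\lambda_j(|\xi|)\gtrsim |\xi|^{2-2\rho}\asymp\eta(|\xi|)$. The exterior zone $\xi\in\ml{Z}_{\extt}(N)$ is handled identically using Proposition~\ref{Prop.EXT}: $T_{\extt}(|\xi|)$ and its inverse are uniformly bounded since $N_3(|\xi|)=\ml{O}(|\xi|^{1-2\theta})\to0$, and the minimal real part of $\mu_1,\dots,\mu_4$ is $\gtrsim|\xi|^{2-2\theta}\asymp\eta(|\xi|)$ for $N$ large. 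For the bounded zone, Proposition~\ref{Prop.BDD} already gives $|W(t,\xi)|\lesssim e^{-ct}|W_0(\xi)|$, and since $\eta(|\xi|)$ is bounded above and below by positive constants on this compact annulus, $e^{-ct}\lesssim e^{-c'\eta(|\xi|)t}$.

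Finally I would glue the three estimates: taking the minimum of the three constants $c$ and using $\chi_{\intt}+\chi_{\text{bdd}}+\chi_{\extt}=1$ yields the claimed bound for every $\xi\in\mb{R}^2$ and $t\geq0$. The main technical point — and the only place needing genuine care rather than bookkeeping — is the interior (and symmetrically the exterior) verification that, after including the $\ml{O}$-error terms in the eigenvalue expansions, the real parts of \emph{all four} eigenvalues stay bounded below by a constant times $|\xi|^{2-2\rho}$ (respectively $|\xi|^{2-2\theta}$); this requires choosing $\varepsilon$ (respectively $N^{-1}$) small enough that the leading terms $a^2|\xi|^{2-2\rho}$ and $|\xi|^{2\rho}$ genuinely dominate their remainders, and checking that the exponents $1+2\theta-2\rho$, $3-4\rho$, $\min\{3-4\rho,2\theta\}$ appearing in the error terms are each strictly larger than $2-2\rho$ — which follows from $2\theta>1$ and $\rho<1/2$.
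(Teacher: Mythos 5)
Your proposal is correct and takes essentially the same route as the paper, whose proof consists precisely of combining Propositions \ref{Prop.INT}, \ref{Prop.EXT} and \ref{Prop.BDD} over the three frequency zones; you merely make explicit the uniform boundedness of the diagonalizers $T_{\intt}^{\pm1}$, $T_{\extt}^{\pm1}$ and the absorption of the remainder terms, which the paper leaves implicit. One cosmetic point: in the case where the remainder exponent is $\min\{3-4\rho;2\theta\}$, the needed inequality $2\theta>2-2\rho$ comes from the case assumption $\rho+\theta>1$ rather than from $2\theta>1$ and $\rho<1/2$ alone.
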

\begin{rem}
The pointwise estimate in Proposition \ref{Prop.PointwiseEst} gives the characterization of the dissipative structure of doubly dissipative elastic waves. We now compare the dissipative structure of doubly dissipative elastic waves and elastic waves with friction or structural damping in \cite{Reissig2016,ChenReissig2019SD}. For one thing, as $|\xi|\rightarrow0$, the dissipative structure of doubly dissipative elastic waves is the same as elastic waves with friction or structural damping $(-\Delta)^{\rho}u_t$ for $\rho\in[0,1/2)$, that is $\eta(|\xi|)\asymp|\xi|^{2-2\rho}$ for $|\xi|\rightarrow0$. For another, as $|\xi|\rightarrow\infty$, the dissipative structure of doubly dissipative elastic waves is the same as elastic waves with structural damping $(-\Delta)^{\theta}u_t$ for $\theta\in(1/2,1]$, that is $\eta(|\xi|)\asymp|\xi|^{2-2\theta}$ for $|\xi|\rightarrow\infty$.
\end{rem}

Now, we state our main result on energy estimates.
\begin{thm}\label{Thm.EnergyEst}
Let us consider the Cauchy problem \eqref{Eq.InvFirstOrderSystem} with $0\leq\rho<1/2<\theta\leq1$ and $U_0\in H^s\cap L^m$, where $s\geq0$ and $m\in[1,2]$. Then, the following estimates hold:
\begin{align*}
\|U(t,\cdot)\|_{\dot{H}^s}\lesssim(1+t)^{-\frac{s}{2-2\rho}-\frac{2-m}{m(2-2\rho)}}\|U_0\big\|_{H^s\cap L^m}.
\end{align*} 
\end{thm}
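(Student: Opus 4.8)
The plan is to reduce the estimate on $U$ to an $L^2$ estimate on $W=\ml{F}_{x\to\xi}(U)$, split the Fourier space into the three zones $\ml{Z}_{\intt}(\varepsilon)$, $\ml{Z}_{\text{bdd}}(\varepsilon,N)$, $\ml{Z}_{\extt}(N)$, and use the pointwise estimate $|W(t,\xi)|\lesssim e^{-c\eta(|\xi|)t}|W_0(\xi)|$ from Proposition \ref{Prop.PointwiseEst}, where $\eta(|\xi|)=\frac{|\xi|^{2-2\rho}}{1+|\xi|^{2\theta-2\rho}}$. Since the transformations relating $\hat u$ and $W$ are bounded with bounded inverse (up to factors of $|\xi|$ that are harmless away from $\xi=0$, and on $\ml{Z}_{\intt}$ are $I+\ml{O}(|\xi|^{1-2\rho})$), it suffices to bound $\||\xi|^s \widehat U(t,\cdot)\|_{L^2}$ and to dominate the $H^s\cap L^m$ norm of $U_0$ from below by the appropriate weighted $L^2$/$L^\infty$ norms of $W_0$.

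For the interior zone I would write, with $\frac1m+\frac1{m'}=1$ so that $m'=\frac{2m}{2-m}$, the Hölder/Hausdorff--Young bound
\begin{align*}
\||\xi|^s e^{-c\eta(|\xi|)t}\widehat{U_0}\|_{L^2(\ml{Z}_{\intt})}
&\lesssim \||\xi|^s e^{-c\eta(|\xi|)t}\|_{L^{m'}(\ml{Z}_{\intt})}\,\|\widehat{U_0}\|_{L^{\frac{2m}{2-m}}}
\lesssim \||\xi|^s e^{-c|\xi|^{2-2\rho}t}\|_{L^{m'}(\ml{Z}_{\intt})}\,\|U_0\|_{L^m},
\end{align*}
using $\eta(|\xi|)\asymp|\xi|^{2-2\rho}$ for $|\xi|\to0$. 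The radial integral $\int_0^\varepsilon r^{sm'+1}e^{-c'm' r^{2-2\rho}t}\,dr$ is estimated by the substitution $r^{2-2\rho}t=\sigma$ (for $t\ge1$; for $t\le1$ everything is bounded), giving a factor $(1+t)^{-\frac{1}{2-2\rho}(s+\frac{2}{m'})}=(1+t)^{-\frac{s}{2-2\rho}-\frac{2-m}{m(2-2\rho)}}$, which is exactly the claimed rate. For the bounded zone, Proposition \ref{Prop.BDD} gives $|W(t,\xi)|\lesssim e^{-ct}|W_0(\xi)|$, so $\||\xi|^s\widehat U(t,\cdot)\|_{L^2(\ml{Z}_{\text{bdd}})}\lesssim e^{-ct}\|U_0\|_{L^2}$, which decays faster than any polynomial and is absorbed. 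For the exterior zone one has $\eta(|\xi|)\asymp|\xi|^{2-2\theta}$; since $2\theta>1$ this does not decay at infinity, but on $\ml{Z}_{\extt}(N)$ one has $\eta(|\xi|)\gtrsim |\xi|^{2-2\theta}\ge N^{-2(\theta-\rho)}|\xi|^{2-2\rho}\cdot(\text{something})$—more simply, $e^{-c\eta(|\xi|)t}\lesssim e^{-c't}$ uniformly since $\eta$ is bounded below on $\ml{Z}_{\extt}$, so $\||\xi|^s\widehat U(t,\cdot)\|_{L^2(\ml{Z}_{\extt})}\lesssim e^{-c't}\||\xi|^s\widehat{U_0}\|_{L^2}\lesssim e^{-c't}\|U_0\|_{\dot H^s}$, again faster than polynomial.

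Collecting the three zones gives
\begin{align*}
\|U(t,\cdot)\|_{\dot H^s}\lesssim (1+t)^{-\frac{s}{2-2\rho}-\frac{2-m}{m(2-2\rho)}}\|U_0\|_{L^m}+e^{-c't}\|U_0\|_{H^s}\lesssim(1+t)^{-\frac{s}{2-2\rho}-\frac{2-m}{m(2-2\rho)}}\|U_0\|_{H^s\cap L^m},
\end{align*}
and then transferring back from $W$ to $U$ to the original displacement $u$ via the (frequency-localized, uniformly bounded) matrices $M(\eta)$, $T_1$, $I+N_2$, $I+N_3$ completes the proof. The main obstacle I anticipate is purely bookkeeping: verifying that passing between $\hat u$ and $W$ costs nothing in the relevant norms—in particular that the factor $|\xi|$ implicit in the definition of $W$ (through the terms $i|\xi|\diag(b,a)v$) and the singular behavior of $M^{-1}(\eta)$, $T_1^{-1}$ near $\xi=0$ do not spoil the estimate. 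This is handled by noting that the estimate for $U$ is stated for $W$-data $U_0$ directly (so no transfer is needed for Theorem \ref{Thm.EnergyEst} as literally stated), and the radial integral asymptotics in the interior zone, which is the only place the decay rate is actually produced, is a standard computation once one isolates $\eta(|\xi|)\asymp|\xi|^{2-2\rho}$ there.
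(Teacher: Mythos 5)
Your proposal follows essentially the same route as the paper's proof: the pointwise bound of Proposition \ref{Prop.PointwiseEst}, the splitting into $\ml{Z}_{\intt}(\varepsilon)$, $\ml{Z}_{\text{bdd}}(\varepsilon,N)$, $\ml{Z}_{\extt}(N)$ with exponential decay on the latter two zones, a H\"older--Hausdorff--Young argument on the small-frequency zone producing the rate, and Parseval--Plancherel at the end, with the observation (as in the paper) that the estimate is stated directly for the system \eqref{Eq.InvFirstOrderSystem}, so no transfer back to $\hat u$ is required. One slip to correct: your exponents in the H\"older step are inconsistent — the conjugate of $m$ is $\frac{m}{m-1}$, not $\frac{2m}{2-m}$, and the correct pairing is $\frac12=\frac{2-m}{2m}+\frac{m-1}{m}$, i.e. the multiplier $|\xi|^{s}e^{-c|\xi|^{2-2\rho}t}$ in $L^{\frac{2m}{2-m}}(\ml{Z}_{\intt})$ and $\widehat{U_0}$ in $L^{\frac{m}{m-1}}$, the latter bounded by $\|U_0\|_{L^m}$ via Hausdorff--Young; as written you place $\widehat{U_0}$ in $L^{\frac{2m}{2-m}}$, which neither satisfies H\"older against your multiplier norm nor is controlled by $\|U_0\|_{L^m}$ (e.g.\ at $m=1$). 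Since your radial-integral computation already uses the correct multiplier exponent $\frac{2m}{2-m}$, the claimed decay rate and the rest of the argument are unaffected once this bookkeeping is fixed.
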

\begin{rem}
According to Proposition \ref{Prop.INT} and sharp pointwise estimate in Proposition \ref{Prop.PointwiseEst}, the energy estimates in Theorem \ref{Thm.EnergyEst} are sharp for initial data $U_0\in H^s\cap L^m$, where $s\geq0$ and $m\in[1,2]$.
\end{rem}
\begin{rem}
We remark that the energy estimates for doubly dissipative elastic waves \eqref{Eq.DoublyDissElasticWaves} in Theorem \ref{Thm.EnergyEst} are the same as damped elastic waves with damping term $(-\Delta)^{\rho}u_t$ for $\rho\in[0,1/2)$ in Theorems 7.2 and 7.3 in \cite{Reissig2016}.
\end{rem}
\begin{rem}\label{Remark.01}
From energy estimates in Theorem \ref{Thm.EnergyEst}, we observe that the decay rate is only determined by the damping term $(-\Delta)^{\rho}u_t$ with $\rho\in[0,1/2)$ in \eqref{Eq.DoublyDissElasticWaves}. For the other damping term $(-\Delta)^{\theta}u_t$ with $\theta\in(1/2,1]$, there is no any influence for the energy estimates. The main reason is that the decay rate for energy estimates of \eqref{Eq.DoublyDissElasticWaves} is mainly determined by dissipative structure for small frequencies. However, for the dissipative structure for small frequencies (see Proposition \ref{Prop.INT}), the dominant influence of eigenvalues are determined by $|\xi|^{2-2\rho}$. Although the parameter $\theta$ in the damping term $(-\Delta)^{\theta}u_t$ has a great influence on the asymptotic behavior of eigenvalues for large frequencies, the solutions satisfies an exponential decay for large frequencies providing that we assume suitable regularity for initial data.
\end{rem}
\begin{proof}
To begin with, by using Proposition \ref{Prop.PointwiseEst}, we calculate
\begin{align*}
\|W(t,\cdot)\|_{\dot{H}^s}&\lesssim\left\|\chi_{\intt}(\xi)|\xi|^se^{-c|\xi|^{2-2\rho}t}W_0(\xi)\right\|_{L^2}+e^{-ct}\left\|\chi_{\text{bdd}}(\xi)|\xi|^sW_0(\xi)\right\|_{L^2}\\
&\quad+\left\|\chi_{\extt}(\xi)|\xi|^se^{-c|\xi|^{2-2\theta}t}W_0(\xi)\right\|_{L^2}\\
&\lesssim\left\|\chi_{\intt}(\xi)|\xi|^se^{-c|\xi|^{2-2\rho}t}W_0(\xi)\right\|_{L^2}+e^{-ct}\|U_0\|_{H^s}.
\end{align*}
Next, we divide the proof into two cases. For the case when $m=2$ in Theorem \ref{Thm.EnergyEst}, we have
\begin{align*}
\left\|\chi_{\intt}(\xi)|\xi|^se^{-c|\xi|^{2-2\rho}t}W_0(\xi)\right\|_{L^2}&\lesssim\sup\limits_{\xi\in\ml{Z}_{\intt}(\varepsilon)}\left||\xi|^se^{-c|\xi|^{2-2\rho}t}\right|\|W_0\|_{L^2}\\
&\lesssim(1+t)^{-\frac{s}{2-2\rho}}\|U_0\|_{L^2}.
\end{align*}
For the case when $m\in[1,2)$ in Theorem \ref{Thm.EnergyEst}, the applications of H\"older's inequality and the Hausdorff-Young inequality yield
\begin{align*}
\left\|\chi_{\intt}(\xi)|\xi|^se^{-c|\xi|^{2-2\rho}t}W_0(\xi)\right\|_{L^2}&\lesssim\left\|\chi_{\intt}(\xi)|\xi|^se^{-c|\xi|^{2-2\rho}t}\right\|_{L^{\frac{2m}{2-m}}}\|U_0\|_{L^m}\\
&\lesssim(1+t)^{-\frac{s}{2-2\rho}-\frac{2-m}{m(2-2\rho)}}\|U_0\|_{L^m}.
\end{align*}
Finally, by applying the Parseval-Plancherel theorem, we immediately complete the proof. 
\end{proof}

Furthermore, we discuss energy estimates in a framework of weighted $L^1$ data.
Before stating our result, we recall the Lemma 2.1 in the paper \cite{Ikehata2004}.
\begin{lem}\label{Lem.IkehataWeightedL1}
	Let $f\in L^{1,\gamma}$ with $\gamma\in(0,1]$. Then, the following estimate holds:
	\begin{align*}
	\left|\hat{f}(\xi)\right|\leq C_{\gamma}|\xi|^{\gamma}\|f\|_{L^{1,\gamma}}+\left|\int_{\mb{R}^n}f(x)dx\right|,
	\end{align*}
	with a positive constant $C_{\gamma}>0$.
\end{lem}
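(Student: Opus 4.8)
The plan is to estimate the difference between $\hat f(\xi)$ and the zero-frequency value $\int_{\mb{R}^n}f(x)\,dx$ by exploiting the elementary bound on $e^{-ix\cdot\xi}-1$, and then to interpolate between the trivial estimate $|e^{-ix\cdot\xi}-1|\le 2$ and the Lipschitz estimate $|e^{-ix\cdot\xi}-1|\le |x\cdot\xi|\le |x|\,|\xi|$. Concretely, I would start from
\begin{align*}
\left|\hat f(\xi)-\int_{\mb{R}^n}f(x)\,dx\right|=\left|\int_{\mb{R}^n}\left(e^{-ix\cdot\xi}-1\right)f(x)\,dx\right|\le \int_{\mb{R}^n}\left|e^{-ix\cdot\xi}-1\right|\,|f(x)|\,dx,
\end{align*}
using the normalization of the Fourier transform in which $\hat f(0)=\int f$.

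The key step is the pointwise inequality $\left|e^{-ix\cdot\xi}-1\right|\le C_\gamma\,(|x|\,|\xi|)^{\gamma}$ valid for all $x,\xi$ and every fixed $\gamma\in(0,1]$, with $C_\gamma$ depending only on $\gamma$ (indeed one may take $C_\gamma=2^{1-\gamma}$, since $|e^{i\tau}-1|\le \min\{2,|\tau|\}\le 2^{1-\gamma}|\tau|^{\gamma}$ for real $\tau$, splitting according to whether $|\tau|\le 2$ or $|\tau|>2$). Plugging this into the integral above gives
\begin{align*}
\left|\hat f(\xi)-\int_{\mb{R}^n}f(x)\,dx\right|\le C_\gamma\,|\xi|^{\gamma}\int_{\mb{R}^n}|x|^{\gamma}\,|f(x)|\,dx\le C_\gamma\,|\xi|^{\gamma}\int_{\mb{R}^n}(1+|x|)^{\gamma}\,|f(x)|\,dx=C_\gamma\,|\xi|^{\gamma}\|f\|_{L^{1,\gamma}},
\end{align*}
where the middle inequality uses $|x|^{\gamma}\le(1+|x|)^{\gamma}$. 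The claimed estimate then follows from the triangle inequality $|\hat f(\xi)|\le \left|\hat f(\xi)-\int f\right|+\left|\int f\right|$, and the finiteness of all quantities is guaranteed by $f\in L^{1,\gamma}\subset L^1$.

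There is essentially no obstacle here: the only point requiring a moment's care is the interpolation inequality for $|e^{i\tau}-1|$, which must be handled uniformly in $\tau$ rather than only for small $\tau$ — this is exactly why the hypothesis $\gamma\le 1$ enters, and why one interpolates against the crude bound $2$ for large $|x\cdot\xi|$. Everything else is a direct application of the triangle inequality and the definition of the norm $\|\cdot\|_{L^{1,\gamma}}$.
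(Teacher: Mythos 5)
Your argument is correct: the elementary interpolation bound $|e^{i\tau}-1|\le\min\{2,|\tau|\}\le 2^{1-\gamma}|\tau|^{\gamma}$ combined with the triangle inequality gives exactly the claimed estimate, and the use of $\gamma\le1$ is handled properly. Note that the paper itself offers no proof — it simply recalls this as Lemma 2.1 of \cite{Ikehata2004} — and your argument is essentially the standard one given there, so there is nothing further to compare.
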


\begin{thm}\label{Thm.EnergyWeighted.L1}
Let us consider the Cauchy problem \eqref{Eq.InvFirstOrderSystem} with $0\leq\rho<1/2<\theta\leq1$ and $U_0\in H^s\cap L^{1,\gamma}$, where $s\geq0$ and $\gamma\in(0,1]$. Then, the following estimates hold:
\begin{align*}
\|U(t,\cdot)\|_{\dot{H}^s}&\lesssim(1+t)^{-\frac{s+\gamma}{2-2\rho}-\frac{1}{2-2\rho}}\|U_0\|_{H^s\cap L^{1,\gamma}}+(1+t)^{-\frac{s}{2-2\rho}-\frac{1}{2-2\rho}}\left|\int_{\mb{R}^2}U_0(x)dx\right|.
\end{align*} 
\end{thm}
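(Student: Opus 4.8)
The plan is to mimic the proof of Theorem \ref{Thm.EnergyEst}, replacing the $L^m$ interpolation step by the weighted-$L^1$ estimate in Lemma \ref{Lem.IkehataWeightedL1}. As in the previous proof, I would start from the pointwise bound of Proposition \ref{Prop.PointwiseEst} and split the $\dot H^s$ norm of $W(t,\cdot)$ into the three frequency zones $\ml{Z}_{\intt}(\varepsilon)$, $\ml{Z}_{\text{bdd}}(\varepsilon,N)$, $\ml{Z}_{\extt}(N)$. On the bounded and exterior zones the symbol $e^{-c\eta(|\xi|)t}$ decays exponentially in $t$ (using $\eta(|\xi|)\gtrsim 1$ there, together with suitable Sobolev regularity of $U_0$ in the exterior zone), so those contributions are bounded by $e^{-ct}\|U_0\|_{H^s}$ and are harmless. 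The whole game is therefore the interior zone, where $\eta(|\xi|)\asymp|\xi|^{2-2\rho}$ and we must estimate
\begin{align*}
\left\|\chi_{\intt}(\xi)|\xi|^s e^{-c|\xi|^{2-2\rho}t}\,\widehat{U_0}(\xi)\right\|_{L^2}.
\end{align*}

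The key step is to insert Lemma \ref{Lem.IkehataWeightedL1}: since $U_0\in L^{1,\gamma}$ with $\gamma\in(0,1]$ we have $|\widehat{U_0}(\xi)|\le C_\gamma|\xi|^\gamma\|U_0\|_{L^{1,\gamma}}+\big|\int_{\mb{R}^2}U_0(x)\,dx\big|$ pointwise in $\xi$. Plugging this into the interior-zone term splits it into two pieces. The first piece is controlled by
\begin{align*}
\|U_0\|_{L^{1,\gamma}}\left\|\chi_{\intt}(\xi)|\xi|^{s+\gamma}e^{-c|\xi|^{2-2\rho}t}\right\|_{L^2},
\end{align*}
and the second by
\begin{align*}
\left|\int_{\mb{R}^2}U_0(x)\,dx\right|\left\|\chi_{\intt}(\xi)|\xi|^{s}e^{-c|\xi|^{2-2\rho}t}\right\|_{L^2}.
\end{align*}
Both are evaluated by the standard scaling computation: for $\kappa\ge0$, $\big\|\chi_{\intt}(\xi)|\xi|^{\kappa}e^{-c|\xi|^{2-2\rho}t}\big\|_{L^2(\mb{R}^2)}\lesssim (1+t)^{-\frac{\kappa}{2-2\rho}-\frac{1}{2-2\rho}}$, the extra $\frac{1}{2-2\rho}$ coming from the two-dimensional volume element $|\xi|\,d|\xi|$ after the substitution $|\xi|=(1+t)^{-1/(2-2\rho)}r$ (one splits into $t\lesssim 1$ and $t\gtrsim 1$; for small $t$ the cut-off makes the norm bounded). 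Taking $\kappa=s+\gamma$ for the first piece and $\kappa=s$ for the second yields exactly the two decay rates $(1+t)^{-\frac{s+\gamma}{2-2\rho}-\frac{1}{2-2\rho}}$ and $(1+t)^{-\frac{s}{2-2\rho}-\frac{1}{2-2\rho}}$ claimed in the statement.

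Finally, collecting the interior, bounded and exterior estimates, noting $e^{-ct}\|U_0\|_{H^s}\lesssim (1+t)^{-\frac{s+\gamma}{2-2\rho}-\frac{1}{2-2\rho}}\|U_0\|_{H^s}$ is absorbed into the first term, and invoking the Parseval--Plancherel identity $\|U(t,\cdot)\|_{\dot H^s}=\|W(t,\cdot)\|_{\dot H^s}$ (up to the equivalence coming from the invertible transforms $M(\eta)$, $T_{\intt}$, $T_{\extt}$ of Section \ref{Sec.AsymptoticBehavior}), completes the proof. I do not expect any serious obstacle: the only mild subtlety is the $t\lesssim 1$ regime of the scaling integral and checking that the exterior-zone exponential decay indeed absorbs into the stated rate, both of which are routine. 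The essential idea is simply that the weight $\gamma$ improves the interior decay by the factor $(1+t)^{-\gamma/(2-2\rho)}$ precisely on the part of the data with nonzero total mass removed.
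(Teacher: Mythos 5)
Your proposal is correct and follows essentially the same route as the paper: the paper likewise keeps the exterior/bounded-zone treatment from Theorem \ref{Thm.EnergyEst} and only modifies the small-frequency estimate by inserting Lemma \ref{Lem.IkehataWeightedL1}, splitting into the $|\xi|^{s+\gamma}$ and $|\xi|^{s}$ pieces and evaluating them by the same scaling computation. No gaps to report.
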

\begin{rem}
We remark that if we take initial data satisfying $\left|\int_{\mb{R}^2}U_0(x)dx\right|=0$
in Theorem \ref{Thm.EnergyWeighted.L1}, then we can observe that the decay rates given in Theorem \ref{Thm.EnergyEst} when $m=1$ can be improved by $(1+t)^{-\frac{\gamma}{2-2\rho}}$ for $\gamma\in(0,1]$.
\end{rem}
\begin{proof}
To prove Theorem \ref{Thm.EnergyWeighted.L1}, we only need to modify the estimate for small frequencies. By using Lemma \ref{Lem.IkehataWeightedL1}, we have
\begin{align*}
|W_0(\xi)|\lesssim |\xi|^{\gamma}\|U_0\|_{L^{1,\gamma}}+\left|\int_{\mb{R}^2}U_0(x)dx\right|.
\end{align*}
Then, we derive
\begin{align*}
\left\|\chi_{\intt}(\xi)|\xi|^se^{-c|\xi|^{2-2\rho}t}W_0(\xi)\right\|_{L^2}&\lesssim\left\|\chi_{\intt}(\xi)|\xi|^{s+\gamma}e^{-c|\xi|^{2-2\rho}t}\right\|_{L^2}\|U_0\|_{L^{1,\gamma}}\\
&\quad+\left\|\chi_{\intt}(\xi)|\xi|^{s}e^{-c|\xi|^{2-2\rho}t}\right\|_{L^2}\left|\int_{\mb{R}^2}U_0(x)dx\right|.
\end{align*}
Then, combining with the proof of Theorem \ref{Thm.EnergyEst}, we complete the proof.
\end{proof}
\section{Diffusion phenomena}\label{Sec.AsymptoticProfiles}
Our main purpose in this section is to obtain diffusion phenomena for doubly dissipative elastic waves. According to Theorems \ref{Thm.EnergyEst} and \ref{Thm.EnergyWeighted.L1}, we observe that the decay rate of energy estimates is determined by small frequencies (see Remark \ref{Remark.01}). However, we may obtain an exponential decay estimates with suitable regularity on initial data for bounded frequencies and large frequencies. For this reason, we will interpret diffusion phenomena by the solutions localized in small frequency zone in this section.

 To do this, we first introduce the corresponding reference systems for the cases $\rho+\theta<1$, $\rho+\theta=1$ and $\rho+\theta>1$, respectively. Firstly, we introduce the matrices
\begin{align*}
M_1:=\left(
{\begin{array}{*{20}c}
	b^2 & 0  & 0 & 0 \\
	0  & a^2 &  0 & 0\\
	0 & 0  & -b^2 & 0 \\
	0 & 0 & 0  & -a^2\\
	\end{array}}
\right)\,\,\,\,\text{and}\,\,\,\,M_2:=\left(
{\begin{array}{*{20}c}
	0 & 0  & 0 & 0 \\
	0  & 0 &  0 & 0\\
	0 & 0  & 1 & 0 \\
	0 & 0 & 0  & 1\\
	\end{array}}
\right).
\end{align*}
Motivated by the principle part of eigenvalues in Proposition \ref{Prop.INT}, we define the different reference systems between the following three cases. 
\begin{itemize}
	\item In the case $\rho+\theta<1$, we define $\widetilde{U}=\widetilde{U}(t,x;\rho,\theta)$ is the solution to the following evolution system:
	\begin{equation}\label{Ref.System01}
	\left\{
	\begin{aligned}
	&\widetilde{U}_t+M_1(-\Delta)^{1-\rho}\widetilde{U}+M_2(-\Delta)^{\rho}\widetilde{U}+M_2(-\Delta)^{\theta}\widetilde{U}=0,&&x\in\mb{R}^2,\,\,t>0,\\
	&\widetilde{U}(0,x)=T_1^{-1}U_0(x),&&x\in\mb{R}^2.
	\end{aligned}
	\right.
	\end{equation}
	\item In the case $\rho+\theta=1$, we define $\widetilde{U}=\widetilde{U}(t,x;\rho,\theta)$ is the solution to the following evolution system:
	\begin{equation}\label{Ref.System02}
	\left\{
	\begin{aligned}
	&\widetilde{U}_t+(M_1+M_2)(-\Delta)^{1-\rho}\widetilde{U}+M_2(-\Delta)^{\rho}\widetilde{U}=0,&&x\in\mb{R}^2,\,\,t>0,\\
	&\widetilde{U}(0,x)=T_1^{-1}U_0(x),&&x\in\mb{R}^2.
	\end{aligned}
	\right.
	\end{equation}
	\item In the case $\rho+\theta>1$, we define $\widetilde{U}=\widetilde{U}(t,x;\rho,\theta)$ is the solution to the following evolution system:
	\begin{equation}\label{Ref.System03}
	\left\{
	\begin{aligned}
	&\widetilde{U}_t+M_1(-\Delta)^{1-\rho}\widetilde{U}+M_2(-\Delta)^{\rho}\widetilde{U}=0,&&x\in\mb{R}^2,\,\,t>0,\\
	&\widetilde{U}(0,x)=T_1^{-1}U_0(x),&&x\in\mb{R}^2.
	\end{aligned}
	\right.
	\end{equation}
\end{itemize}
Here the matrix $T_1$ is defined in \eqref{Matrix.T1}.

Let us now give some explanation for these reference system.

In the case when $\rho+\theta<1$, for the evolution system \eqref{Ref.System01}, we find that the reference system is made up of three different parabolic systems. We may interpret this new effect as \emph{triple diffusion phenomena}. This effect is shown firstly in \cite{Chen2019TEP} for thermoelastic plate equations with structural damping. In this case, the damping term $(-\Delta)^{\theta}u_t$ with $\theta\in(1/2,1]$ in \eqref{Eq.DoublyDissElasticWaves} really has influence on the diffusion structure. But this effect does not appear in the other case $\rho+\theta\geq1$.

However, we find that when $\rho+\theta\geq1$, the reference system \eqref{Ref.System01} is changed into \eqref{Ref.System02} and \eqref{Ref.System03}. Obviously, these reference systems are only made up of two different parabolic systems, whose structures are similar as reference system for elastic waves with damping term $(-\Delta)^{\rho}u$ for $\rho\in[0,1/2)$. We may interpret this effect as \emph{double diffusion phenomena} (one may see the pioneering paper \cite{DabbiccoEbert2014}).

From the above discussions, we observe a new threshold of diffusion structure for doubly dissipative elastic waves, that is $\rho+\theta=1$. In other words, the structure of the reference system will be changed with the parameters changing from $\rho+\theta<1$ to $\rho+\theta\geq1$.

Let us begin to state our main theorems on diffusion phenomena.

\begin{thm}\label{Thm.DiffusionPhenomena}
Let us consider the Cauchy problem \eqref{Eq.InvFirstOrderSystem} with $0\leq \rho<1/2<\theta\leq1$ and $U_0\in L^m$ with $m\in[1,2]$. Then, the following refinement estimates hold:
\begin{align*}
&\left\|\chi_{\intt}(D)\left(U(t,\cdot)-T_1\widetilde{U}(t,\cdot;\rho,\theta)\right)\right\|_{\dot{H}^s}\lesssim(1+t)^{-\frac{s}{2-2\rho}-\frac{2-m}{m(2-2\rho)}-q(\rho,\theta)}\|U_0\|_{L^m},
\end{align*}
where the function $q=q(\rho,\theta)$ is defined by 
\begin{equation}\label{Fun.q(rho,theta)}
q(\rho,\theta):=
\left\{
\begin{aligned}
&\frac{2\theta-1}{2-2\rho},&&\text{if }\rho+\theta<1,\\
&\frac{1-2\rho}{2-2\rho},&&\text{if }\rho+\theta\geq1,\\
\end{aligned}
\right.
\end{equation}
the matrix $T_1$ is defined in \eqref{Matrix.T1}.
\end{thm}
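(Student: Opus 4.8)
The plan is to compare the Fourier symbol of the true solution $W(t,\xi)$ localized in $\ml{Z}_{\intt}(\varepsilon)$ with the Fourier symbol of the reference solution $\widetilde{W}(t,\xi) := \ml{F}_{x\to\xi}(\widetilde{U}(t,\cdot;\rho,\theta))$, and to control the difference by an extra power of $|\xi|$ which, after integration against the $L^m$ Fourier data, yields the gain $q(\rho,\theta)$. First I would write the true solution in small frequencies via Proposition \ref{Prop.INT} as
\begin{align*}
W(t,\xi)=T_{\intt}^{-1}(|\xi|)\diag\left(e^{-\lambda_j(|\xi|)t}\right)_{j=1}^4 T_{\intt}(|\xi|)W_0(\xi),
\end{align*}
with $T_{\intt}(|\xi|)=(I_{4\times4}+N_2(|\xi|))^{-1}T_1^{-1}$, and the reference solution as $\widetilde{W}(t,\xi)=T_1\,\diag(e^{-\tilde\lambda_j(|\xi|)t})_{j=1}^4\,T_1^{-1}W_0(\xi)$ after diagonalizing the constant-coefficient systems \eqref{Ref.System01}--\eqref{Ref.System03} (note $M_1$ and $M_1+M_2$ are already diagonal, so $\tilde\lambda_j$ are exactly the principal parts appearing in Proposition \ref{Prop.INT}). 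Thus $T_1\widetilde{U}$ has Fourier symbol $T_1 T_1^{-1}\diag(e^{-\tilde\lambda_j t})T_1 T_1^{-1}W_0 = \diag(e^{-\tilde\lambda_j t})W_0$ conjugated appropriately; I would organize the bookkeeping so that the outer transformation matrices match and only the inner diagonal/perturbation pieces must be compared.

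The difference then splits into two contributions: (i) the mismatch between $T_{\intt}(|\xi|)$ and $T_1$, which is $N_2(|\xi|)=\ml{O}(|\xi|^{1-2\rho})$ near $\xi=0$ — this is precisely the gain $\frac{1-2\rho}{2-2\rho}$, matching $q$ in the regime $\rho+\theta\geq1$; and (ii) the mismatch between $e^{-\lambda_j(|\xi|)t}$ and $e^{-\tilde\lambda_j(|\xi|)t}$, controlled by the elementary inequality $|e^{-\lambda_j t}-e^{-\tilde\lambda_j t}| \le |\lambda_j-\tilde\lambda_j|\,t\,e^{-c|\xi|^{2-2\rho}t}$, valid once one checks $\mathrm{Re}\,\lambda_j \ge c|\xi|^{2-2\rho}$ (from Proposition \ref{Prop.PointwiseEst}) and $\tilde\lambda_j$ has the same lower bound. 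Here $|\lambda_j-\tilde\lambda_j|$ is the remainder term from Proposition \ref{Prop.INT}: $\ml{O}(|\xi|^{1+2\theta-2\rho})$ when $\rho+\theta<1$ (giving gain $\frac{1+2\theta-2\rho-(2-2\rho)}{2-2\rho}=\frac{2\theta-1}{2-2\rho}$), $\ml{O}(|\xi|^{3-4\rho})$ when $\rho+\theta=1$, and $\ml{O}(|\xi|^{\min\{3-4\rho;2\theta\}})$ when $\rho+\theta>1$; in the last two cases one checks the resulting exponent exceeds $2-2\rho$ by at least $1-2\rho$, so (i) is the binding constraint and the stated $q$ is correct. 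The factor $t$ is absorbed because $t\,|\xi|^{\delta}e^{-c|\xi|^{2-2\rho}t}\lesssim |\xi|^{\delta-(2-2\rho)}$ on $\ml{Z}_{\intt}(\varepsilon)$.

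After these pointwise symbol bounds, I would finish exactly as in the proof of Theorem \ref{Thm.EnergyEst}: estimate $\big\|\chi_{\intt}(\xi)|\xi|^{s}\big(W(t,\xi)-\widetilde{\ml{F}(T_1\widetilde{U})}\big)\big\|_{L^2}$ by pulling out $\|W_0(\xi)\|_{L^\infty}\lesssim\|U_0\|_{L^m}$ (Hausdorff--Young) and computing $\big\|\chi_{\intt}(\xi)|\xi|^{s+\kappa}e^{-c|\xi|^{2-2\rho}t}\big\|_{L^{2m/(2-m)}}\lesssim(1+t)^{-\frac{s+\kappa}{2-2\rho}-\frac{2-m}{m(2-2\rho)}}$ with $\kappa$ the gained power; then Parseval gives the $\dot H^s$ bound. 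The main obstacle I anticipate is purely algebraic: carefully propagating the conjugating matrices $T_1$, $(I+N_2)^{-1}$ through the diagonalization so that the ``reference'' Fourier multiplier is genuinely $T_1\widetilde{U}$ and not some off-by-a-transformation object, and verifying in each of the three cases that the diagonal part of the reference system reproduces exactly the principal eigenvalue asymptotics of Proposition \ref{Prop.INT} (in particular that $M_2(-\Delta)^{\rho}+M_2(-\Delta)^{\theta}$ versus $M_2(-\Delta)^{\rho}$ correctly captures which lower-order terms survive in the threshold $\rho+\theta=1$). The analytic estimates themselves are routine once the correct remainder orders are identified.
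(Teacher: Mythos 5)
Your strategy is the same as the paper's: represent $W$ in $\ml{Z}_{\intt}(\varepsilon)$ via Proposition \ref{Prop.INT}, note that the reference systems are already diagonal in Fourier variables so that $\widetilde{W}=\diag(e^{-\tilde\lambda_j t})_{j=1}^4T_1^{-1}W_0$ with $\tilde\lambda_j$ the principal parts, split the difference into the transformation mismatch coming from $N_2(|\xi|)=\ml{O}(|\xi|^{1-2\rho})$ (the paper's $J_2+J_3$) and the exponential mismatch controlled by the mean value theorem (the paper's $J_1$), and finish with the H\"older/Hausdorff--Young computation of Theorem \ref{Thm.EnergyEst}. For the case $\rho+\theta<1$, which is the only case the paper proves in detail, your bookkeeping and exponents agree with the paper (gain $\min\{\frac{2\theta-1}{2-2\rho},\frac{1-2\rho}{2-2\rho}\}=\frac{2\theta-1}{2-2\rho}$). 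A small imprecision: pulling out $\|W_0\|_{L^\infty}\lesssim\|U_0\|_{L^m}$ is only valid for $m=1$; for $m\in(1,2]$ one uses H\"older with exponents $\frac{2m}{2-m}$ and $m'$ together with Hausdorff--Young, as in the paper -- but since you also quote the $L^{2m/(2-m)}$ norm of the symbol, this is cosmetic.

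There is, however, a genuine gap in your treatment of the case $\rho+\theta>1$. You claim that the eigenvalue remainder exponent $\min\{3-4\rho;2\theta\}$ ``exceeds $2-2\rho$ by at least $1-2\rho$''. That requires $2\theta-(2-2\rho)\geq 1-2\rho$, i.e. $2\theta+4\rho\geq3$, which is \emph{not} implied by $\rho+\theta>1$: for instance $\rho=0.1$, $\theta=0.95$ gives $\rho+\theta>1$ but $2\theta+4\rho=2.3<3$, so your argument only yields the gain $\frac{2\theta+2\rho-2}{2-2\rho}<\frac{1-2\rho}{2-2\rho}=q(\rho,\theta)$ and does not prove the stated rate in that parameter subregion. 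To close this you need a finer, mode-by-mode argument: the $\ml{O}(|\xi|^{2\theta})$ part of the remainder stems from $B_0^{(1)}(|\xi|)=|\xi|^{2\theta}\diag(0,0,1,1)$ and therefore perturbs only $\lambda_3,\lambda_4$ at that order (its effect on $\lambda_1,\lambda_2$ enters through commutators of size $\ml{O}(|\xi|^{1+2\theta-2\rho})$, and $1+2\theta-2\rho-(2-2\rho)=2\theta-1\geq1-2\rho$ precisely when $\rho+\theta\geq1$); moreover for $j=3,4$ both $e^{-\lambda_jt}$ and $e^{-\tilde\lambda_jt}$ carry the factor $e^{-c|\xi|^{2\rho}t}$, which, since $2\rho<2-2\rho$, produces polynomial decay strictly faster than the target rate, so these modes are harmless. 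Without such a refinement (which the paper also hides behind ``the proof is similar''), your one-line verification of $q(\rho,\theta)$ for $\rho+\theta>1$ fails.
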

\begin{proof}
	Here we only prove the case when $\rho+\theta<1$. For the other case when $\rho+\theta\geq1$, its proof is similar as the following discussion. Thus, we omit it.

    First of all, let us apply the partial Fourier transform with respect to spatial variable such that $\widetilde{W}(t,\xi;\rho,\theta)=\ml{F}_{x\rightarrow\xi}(\widetilde{U}(t,x;\rho,\theta))$ to get
	\begin{align*}
	\widetilde{W}(t,\xi;\rho,\theta)=\diag\left(e^{-\tilde{\lambda}_j(|\xi|)t}\right)_{j=1}^4T_1^{-1}W_0(\xi),
	\end{align*}
	where
	\begin{equation*}
	\begin{aligned}
	&\tilde{\lambda}_{1}(|\xi|)=b^2|\xi|^{2-2\rho},&&\tilde{\lambda}_{2}(|\xi|)=a^2|\xi|^{2-2\rho},\\ &\tilde{\lambda}_{3}(|\xi|)=|\xi|^{2\rho}+|\xi|^{2\theta}-b^2|\xi|^{2-2\rho},&& \tilde{\lambda}_{4}(|\xi|)=|\xi|^{2\rho}+|\xi|^{2\theta}-a^2|\xi|^{2-2\rho}.
	\end{aligned}
	\end{equation*}
	We remark that $\tilde{\lambda}_j(|\xi|)$ are the principle parts of eigenvalues $\lambda_j(|\xi|)$ for $j=1,\dots,4$ (one may recall the statement of Proposition \ref{Prop.INT}).\\
According to the representation of solutions for $\xi\in\ml{Z}_{\intt}(\varepsilon)$ in Proposition \ref{Prop.INT}, we may obtain
\begin{align*}
\chi_{\intt}(\xi)|\xi|^s\left(W(t,\xi)-T_1\widetilde{W}(t,\xi;\rho,\theta)\right)=\chi_{\intt}(\xi)|\xi|^s\left(J_1(t,|\xi|)+J_2(t,|\xi|)+J_3(t,|\xi|)\right)W_0(\xi),
\end{align*}
where
\begin{align*}
J_1(t,|\xi|)&=T_1\diag\left(e^{-\lambda_j(|\xi|)t}-e^{-\tilde{\lambda}_j(|\xi|)t}\right)_{j=1}^4T_1^{-1},\\
J_2(t,|\xi|)&=T_1N_2(|\xi|)\diag\left(e^{-\lambda_j(|\xi|)t}\right)_{j=1}^4(I_{4\times4}+N_2(|\xi|))^{-1}T_1^{-1},\\
J_3(t,|\xi|)&=-T_1(I_{4\times4}+N_2(|\xi|))\diag\left(e^{-\lambda_j(|\xi|)t}\right)_{j=1}^4N_2(|\xi|)(I_{4\times 4}+N_2(|\xi|))^{-1}T_1^{-1}.
\end{align*}
Here the matrix $N_2(|\xi|)=\ml{O}\left(|\xi|^{1-2\rho}\right)$ is defined in \eqref{Matrix.N2}. In the above equation we used
\begin{align*}
(I_{t\times4}+N_2(|\xi|))^{-1}=I_{4\times4}-N_2(|\xi|)(I_{4\times4}+N_2(|\xi|))^{-1}.
\end{align*}

We now begin to estimate $J_1(t,|\xi|)$ and $J_2(t,|\xi|)+J_3(t,|\xi|)$, respectively. By means value theorem, we know that
\begin{align*}
\chi_{\intt}(\xi)\left|e^{-\lambda_j(|\xi|)t}-e^{-\tilde{\lambda}_j(|\xi|)t}\right|\lesssim (1+t)\chi_{\intt}(\xi)|\xi|^{1+2\theta-2\rho}e^{-\tilde{\lambda}_j(|\xi|)t}.
\end{align*}
Thus,
\begin{align*}
\left\|\chi_{\intt}(\xi)|\xi|^{s}J_1(t,\xi)W_0(\xi)\right\|_{L^2}&\lesssim(1+t)\left\|\chi_{\intt}(\xi)|\xi|^{s+1+2\theta-2\rho}\diag\left(e^{-\tilde{\lambda}_j(|\xi|)t}\right)_{j=1}^4W_0(\xi)\right\|_{L^2}\\
&\lesssim(1+t)^{-\frac{s}{2-2\rho}-\frac{2-m}{m(2-2\rho)}-\frac{2\theta-1}{2-2\rho}}\|U_0\|_{L^m}.
\end{align*}
Due to the fact that $N_2(|\xi|)=\ml{O}\left(|\xi|^{1-2\rho}\right)$, we have
\begin{align*}
\left\|\chi_{\intt}(\xi)|\xi|^{s}(J_2(t,\xi)+J_3(t,|\xi|))W_0(\xi)\right\|_{L^2}&\lesssim\left\|\chi_{\intt}(\xi)|\xi|^{s+1-2\rho}\diag\left(e^{-\lambda_j(|\xi|)t}\right)_{j=1}^4W_0(\xi)\right\|_{L^2}\\
&\lesssim(1+t)^{-\frac{s}{2-2\rho}-\frac{2-m}{m(2-2\rho)}-\frac{1-2\rho}{2-2\rho}}\|U_0\|_{L^m}
\end{align*}
Summarizing the above estimates leads to
\begin{align*}
\left\|\chi_{\intt}(\xi)|\xi|^s\left(W(t,\xi)-T_1\widetilde{W}(t,\xi;\rho,\theta)\right)\right\|_{L^2}\lesssim(1+t)^{-\frac{s}{2-2\rho}-\frac{2-m}{m(2-2\rho)}-\frac{2\theta-1}{2-2\rho}}\|U_0\|_{L^m},
\end{align*}
where we used our condition $\rho+\theta<1$.\\
Finally, applying the Parseval-Plancherel theorem, we complete the proof of the theorem.
\end{proof}
\begin{thm}\label{Thm.DiffusionPhenWeightL1}
Let us consider the Cauchy problem \eqref{Eq.InvFirstOrderSystem} with $0\leq \rho<1/2<\theta\leq1$ and $U_0\in L^{1,\gamma}$ with $\gamma\in(0,1]$. Then, the following refinement estimates hold:
\begin{align*}
&\left\|\chi_{\intt}(D)\left(U(t,\cdot)-T_1\widetilde{U}(t,\cdot;\rho,\theta)\right)\right\|_{\dot{H}^s}\lesssim(1+t)^{-\frac{s+\gamma}{2-2\rho}-\frac{1}{2-2\rho}-q(\rho,\theta)}\|U_0\|_{L^{1,\gamma}},
\end{align*}
where the function $q=q(\rho,\theta)$ is defined in \eqref{Fun.q(rho,theta)} and the matrix $T_1$ is defined in \eqref{Matrix.T1}.
\end{thm}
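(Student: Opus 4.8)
The plan is to follow the scheme of the proof of Theorem \ref{Thm.DiffusionPhenomena} verbatim, replacing only the input estimate on $W_0(\xi)$ for small frequencies. As before it suffices to treat the case $\rho+\theta<1$; the cases $\rho+\theta\geq1$ are handled by the same argument with the reference systems \eqref{Ref.System02}, \eqref{Ref.System03} in place of \eqref{Ref.System01}, and are therefore omitted. First I would record, exactly as in Theorem \ref{Thm.DiffusionPhenomena}, the decomposition
\begin{align*}
\chi_{\intt}(\xi)|\xi|^s\left(W(t,\xi)-T_1\widetilde{W}(t,\xi;\rho,\theta)\right)=\chi_{\intt}(\xi)|\xi|^s\left(J_1(t,|\xi|)+J_2(t,|\xi|)+J_3(t,|\xi|)\right)W_0(\xi),
\end{align*}
with $J_1,J_2,J_3$ as in that proof, together with the two pointwise bounds already established there, namely $\chi_{\intt}(\xi)|e^{-\lambda_j(|\xi|)t}-e^{-\tilde\lambda_j(|\xi|)t}|\lesssim(1+t)\chi_{\intt}(\xi)|\xi|^{1+2\theta-2\rho}e^{-\tilde\lambda_j(|\xi|)t}$ from the mean value theorem, and $N_2(|\xi|)=\ml{O}(|\xi|^{1-2\rho})$ for the $J_2+J_3$ term. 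Note that since $\rho+\theta<1$ we have $1+2\theta-2\rho<2-2\rho$, so the $J_1$ contribution carries the slower decay and the $J_2+J_3$ contribution can be absorbed — this is why $q(\rho,\theta)=\frac{2\theta-1}{2-2\rho}$ in this case.

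Next I would insert the weighted-$L^1$ input from Lemma \ref{Lem.IkehataWeightedL1}, exactly as in the proof of Theorem \ref{Thm.EnergyWeighted.L1}: since $U_0\in L^{1,\gamma}$ with $\gamma\in(0,1]$,
\begin{align*}
|W_0(\xi)|\lesssim|\xi|^{\gamma}\|U_0\|_{L^{1,\gamma}}+\left|\int_{\mb{R}^2}U_0(x)dx\right|.
\end{align*}
Then for the $J_1$ piece I would estimate, using the above pointwise bound and $\tilde\lambda_j(|\xi|)\asymp|\xi|^{2-2\rho}$ on $\ml{Z}_{\intt}(\varepsilon)$,
\begin{align*}
\left\|\chi_{\intt}(\xi)|\xi|^{s}J_1(t,\xi)W_0(\xi)\right\|_{L^2}&\lesssim(1+t)\left\|\chi_{\intt}(\xi)|\xi|^{s+\gamma+1+2\theta-2\rho}e^{-c|\xi|^{2-2\rho}t}\right\|_{L^2}\|U_0\|_{L^{1,\gamma}}\\
&\quad+(1+t)\left\|\chi_{\intt}(\xi)|\xi|^{s+1+2\theta-2\rho}e^{-c|\xi|^{2-2\rho}t}\right\|_{L^2}\left|\int_{\mb{R}^2}U_0(x)dx\right|,
\end{align*}
and similarly for $J_2+J_3$ with $|\xi|^{s+1-2\rho}$ in place of $|\xi|^{s+1+2\theta-2\rho}$. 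Each $L^2$-norm over $\ml{Z}_{\intt}(\varepsilon)$ of $|\xi|^{\kappa}e^{-c|\xi|^{2-2\rho}t}$ in two space dimensions is estimated by the standard change of variables $r=|\xi|$, $\tau=r^{2-2\rho}t$, giving $(1+t)^{-\frac{\kappa+1}{2-2\rho}}$ (the extra $+1$ coming from the two-dimensional surface measure $r\,dr$); combining the powers $(1+t)$ then yields $(1+t)^{-\frac{s+\gamma}{2-2\rho}-\frac{1}{2-2\rho}-\frac{2\theta-1}{2-2\rho}}\|U_0\|_{L^{1,\gamma}}$ from the first term and a faster-decaying (by $\frac{\gamma}{2-2\rho}$) term from the $\int U_0$ part, which is therefore absorbed into the first. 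Finally, applying the Parseval–Plancherel theorem converts the Fourier-side bound into the claimed $\dot H^s$ estimate.

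There is no genuine obstacle here: the work is essentially bookkeeping, since the mean-value-theorem bound on $e^{-\lambda_j t}-e^{-\tilde\lambda_j t}$ and the structure of $J_1,J_2,J_3$ are already in place from Theorem \ref{Thm.DiffusionPhenomena}, and the weighted-$L^1$ substitution is already in place from Theorem \ref{Thm.EnergyWeighted.L1}. The only mild subtlety is checking that the $\left|\int_{\mb{R}^2}U_0(x)dx\right|$ term does indeed decay strictly faster than the $\|U_0\|_{L^{1,\gamma}}$ term (by exactly the factor $(1+t)^{-\gamma/(2-2\rho)}$) so that it can be dropped from the final bound; this follows immediately from comparing the exponents $s+\gamma$ and $s$ in the two $L^2$ integrals. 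One should also remark, as in the earlier theorems, that the cases $\rho+\theta\geq1$ produce $q(\rho,\theta)=\frac{1-2\rho}{2-2\rho}$ because there the remainder in Proposition \ref{Prop.INT} is of order $|\xi|^{\min\{3-4\rho;2\theta\}}$, which is worse than (or equal to) $|\xi|^{2-2\rho}$ only through the $J_2+J_3$ term of size $|\xi|^{1-2\rho}$, so that term dictates the gain.
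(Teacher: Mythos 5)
Your overall scheme is exactly the paper's (the paper's proof is literally ``follow the proofs of Theorems \ref{Thm.EnergyWeighted.L1} and \ref{Thm.DiffusionPhenomena}''): the $J_1,J_2,J_3$ decomposition, the mean value theorem bound, the substitution of Lemma \ref{Lem.IkehataWeightedL1}, and the two-dimensional $L^2$ computation of $\left\|\chi_{\intt}(\xi)|\xi|^{\kappa}e^{-c|\xi|^{2-2\rho}t}\right\|_{L^2}\lesssim(1+t)^{-\frac{\kappa+1}{2-2\rho}}$ are all correct and are what the author intends. The genuine problem is your final absorption step: you claim the $\left|\int_{\mb{R}^2}U_0(x)dx\right|$ contribution decays \emph{faster} by $(1+t)^{-\frac{\gamma}{2-2\rho}}$ and can therefore be dropped, and you justify this ``by comparing the exponents $s+\gamma$ and $s$.'' That comparison goes the other way: the term carrying $|\xi|^{s+\gamma}$ (i.e.\ the $\|U_0\|_{L^{1,\gamma}}$ term) is the faster one, while the $\left|\int_{\mb{R}^2}U_0(x)dx\right|$ term only carries $|\xi|^{s}$ and hence decays \emph{slower} by exactly $(1+t)^{\frac{\gamma}{2-2\rho}}$. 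Since $\left|\int_{\mb{R}^2}U_0(x)dx\right|$ is in general nonzero and is not dominated by anything better than $\|U_0\|_{L^{1}}\leq\|U_0\|_{L^{1,\gamma}}$, it cannot be absorbed into the stated single-term bound.

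What your argument actually proves is the two-term estimate
\begin{align*}
\left\|\chi_{\intt}(D)\left(U(t,\cdot)-T_1\widetilde{U}(t,\cdot;\rho,\theta)\right)\right\|_{\dot{H}^s}
\lesssim(1+t)^{-\frac{s+\gamma}{2-2\rho}-\frac{1}{2-2\rho}-q(\rho,\theta)}\|U_0\|_{L^{1,\gamma}}
+(1+t)^{-\frac{s}{2-2\rho}-\frac{1}{2-2\rho}-q(\rho,\theta)}\left|\int_{\mb{R}^2}U_0(x)dx\right|,
\end{align*}
which is the exact analogue of Theorem \ref{Thm.EnergyWeighted.L1} (note that that theorem keeps both terms in its statement for precisely this reason). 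To obtain the one-term bound as literally written in Theorem \ref{Thm.DiffusionPhenWeightL1} you must either retain the mean-value term as above or impose $\int_{\mb{R}^2}U_0(x)dx=0$; the gain of $(1+t)^{-\frac{\gamma}{2-2\rho}}$ over the $L^1$ case is only available in that situation. So fix the direction of the comparison and either add the $\left|\int_{\mb{R}^2}U_0(x)dx\right|$ term to the conclusion or state the zero-mean hypothesis; everything else in your write-up (including treating only $\rho+\theta<1$ and appealing to the same computation with the remainder of size $|\xi|^{1-2\rho}$, resp.\ the reference systems \eqref{Ref.System02}, \eqref{Ref.System03}, in the case $\rho+\theta\geq1$) matches the paper's intended argument.
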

\begin{proof}
We may immediately compete the proof of this result by following the procedure from the proofs of Theorems \ref{Thm.EnergyWeighted.L1} and \ref{Thm.DiffusionPhenomena}.
\end{proof}

\begin{rem}
According to Theorems \ref{Thm.EnergyEst}, \ref{Thm.EnergyWeighted.L1}, \ref{Thm.DiffusionPhenomena} and \ref{Thm.DiffusionPhenWeightL1}, the decay rate $(1+t)^{-q(\rho,\theta)}$ can be gained by subtracting the solutions $\widetilde{U}(t,x;\rho,\theta)$ for the reference systems \eqref{Ref.System01}, \eqref{Ref.System02} and \eqref{Ref.System03}. From the value of $q(\rho,\theta)$, we also find that the threshold for diffusion structure is $\rho+\theta=1$.
\end{rem}

\section{Concluding remarks}\label{Sec.ConcludingRemarks}
\begin{rem}
Let us discuss about smoothing effect of solutions. We first introduce the Gevrey space $\Gamma^{\kappa}$ with $\kappa\in[1,\infty)$ (see \cite{Rodino1993}), where
\begin{align*}
\Gamma^{\kappa}:=\left\{f\in L^2\,:\,\text{there exists a constant }c\text{ such that }\exp\left(c\langle\xi\rangle^{\frac{1}{\kappa}}\right)\ml{F}(f)\in L^2\right\}.
\end{align*}
By using Proposition \ref{Prop.EXT} with the same approach of \cite{Reissig2016}, we immediately obtain the following results.
\begin{thm}
Let us consider the Cauchy problem \eqref{Eq.InvFirstOrderSystem} with $0\leq\rho<1/2<\theta<1$ and $U_0\in L^2$. Then, the solutions satisfy $|D|^sU(t,\cdot)\in\Gamma^{\frac{1}{2-2\theta}}$ with $s\geq0$. However, when $0\leq\rho<1/2<\theta=1$, the solutions do not belong to any Gevrey space.
\end{thm}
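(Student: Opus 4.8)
The plan is to exploit the large-frequency representation from Proposition \ref{Prop.EXT}, since Gevrey regularity is entirely a statement about the decay of $\widehat{U}(t,\xi)$ as $|\xi|\to\infty$; the behavior in $\ml{Z}_{\intt}(\varepsilon)$ and $\ml{Z}_{\text{bdd}}(\varepsilon,N)$ contributes only (exponentially decaying, or at worst polynomially controlled) Schwartz-type terms that already lie in any Gevrey class. So first I would localize: write $\widehat{U}=\chi_{\intt}\widehat{U}+\chi_{\text{bdd}}\widehat{U}+\chi_{\extt}\widehat{U}$ and observe that for the first two pieces the estimates of Propositions \ref{Prop.INT} and \ref{Prop.BDD} give $|\widehat{U}(t,\xi)|\lesssim e^{-ct}|W_0(\xi)|$ on $\ml{Z}_{\text{bdd}}$ and rapid decay on $\ml{Z}_{\intt}$, hence these contributions trivially satisfy $\exp(c\langle\xi\rangle^{1/\kappa})|D|^s(\cdot)\in L^2$ for every $\kappa\geq1$ once $U_0\in L^2$.

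For the essential piece $\chi_{\extt}(\xi)\widehat{U}(t,\xi)$, I would insert the representation $W(t,\xi)=T_{\extt}^{-1}(|\xi|)\diag(e^{-\mu_j(|\xi|)t})_{j=1}^4 T_{\extt}(|\xi|)W_0(\xi)$. The key point is that on $\ml{Z}_{\extt}(N)$ the transfer matrices $T_{\extt}^{\pm1}(|\xi|)=(I+N_3(|\xi|))^{\mp1}T_1^{\mp1}$ are uniformly bounded (since $N_3(|\xi|)=\ml{O}(|\xi|^{1-2\theta})\to0$), so everything reduces to the scalar factors $|\xi|^s e^{-\operatorname{Re}\mu_j(|\xi|)t}$. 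From Proposition \ref{Prop.EXT}, for every admissible $\rho,\theta$ two of the eigenvalues satisfy $\operatorname{Re}\mu_j(|\xi|)\asymp|\xi|^{2-2\theta}$ (the "parabolic" branches $\mu_1,\mu_2$), which is exactly the decay that forces Gevrey class $\Gamma^{1/(2-2\theta)}$ when $\theta<1$: one then uses the elementary inequality $\exp(c|\xi|^{2-2\theta}) \cdot |\xi|^s e^{-c'|\xi|^{2-2\theta}t}\lesssim_{t} 1$ valid for $c<c't$ and $|\xi|$ large, so after absorbing a fixed time into the constant (or restricting to any $t>0$, since the claim is for each fixed $t$) we get $\exp(c\langle\xi\rangle^{2-2\theta})|\xi|^s\widehat{U}(t,\xi)\in L^2$; because $\langle\xi\rangle^{1/\kappa}$ with $\kappa=1/(2-2\theta)$ is $\asymp|\xi|^{2-2\theta}$ at infinity this is precisely $|D|^sU(t,\cdot)\in\Gamma^{1/(2-2\theta)}$. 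The remaining two branches $\mu_3,\mu_4$ behave like $|\xi|^{2\theta}$ (up to the lower-order $|\xi|^{2\rho}$ or $|\xi|^{2-2\theta}$ corrections), which decays faster, so they cause no loss; one must only check that the lower-order remainders in $\operatorname{Re}\mu_3,\operatorname{Re}\mu_4$ — of order $|\xi|^{2-2\theta}$ or $|\xi|^{2\rho}$ — do not spoil positivity, which holds for $|\xi|>N$ with $N$ large.

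For the borderline case $\theta=1$, the parabolic branches obey $\operatorname{Re}\mu_j(|\xi|)\asymp|\xi|^{2-2\theta}=|\xi|^{0}$, i.e. they are merely bounded from below by a positive constant and do \emph{not} grow, so $e^{-\operatorname{Re}\mu_j(|\xi|)t}$ decays only like a fixed constant $e^{-ct}$ and provides no frequency-dependent smoothing; consequently no factor $\exp(c\langle\xi\rangle^{1/\kappa})$ with $\kappa<\infty$ can be absorbed, and by testing against data whose Fourier transform is supported at high frequencies one sees the solution genuinely fails to lie in any $\Gamma^{\kappa}$, $\kappa\in[1,\infty)$. The main obstacle, such as it is, is bookkeeping: verifying that the transfer matrices $T_{\extt}^{\pm1}$ are uniformly bounded on $\ml{Z}_{\extt}(N)$ (immediate from $N_3\to0$) and that the $\ml{O}(\cdot)$ remainders in the eigenvalue asymptotics of Proposition \ref{Prop.EXT} are genuinely lower order than the leading $|\xi|^{2-2\theta}$ term on the parabolic branches — after that the Gevrey membership follows from the one-line exponential inequality above, exactly as in the corresponding argument of \cite{Reissig2016}.
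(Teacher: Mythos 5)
Your argument is correct and is essentially the paper's own proof, which the paper compresses into a one-line appeal to Proposition \ref{Prop.EXT} and the method of \cite{Reissig2016}: Gevrey regularity is decided on $\ml{Z}_{\extt}(N)$, where the uniformly bounded transfer matrices $T_{\extt}^{\pm1}$ (since $N_3(|\xi|)=\ml{O}(|\xi|^{1-2\theta})\to0$) reduce everything to the slowest branches $\operatorname{Re}\mu_{1,2}\asymp|\xi|^{2-2\theta}$, against which the weight $\exp\left(c\langle\xi\rangle^{2-2\theta}\right)$ is absorbed for $c<c't$ at each fixed $t>0$, while the small and bounded frequency pieces are harmless simply because the weight is bounded on compact frequency sets. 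For the negative claim at $\theta=1$, note explicitly that the invertibility of $T_{\extt}$ also gives the lower bound $|W(t,\xi)|\gtrsim e^{-Ct}\left|\left(T_{\extt}(|\xi|)W_0(\xi)\right)_{1}\right|$ for $|\xi|>N$ with $C$ independent of $\xi$, which is what makes your ``test data at high frequencies'' argument rigorous.
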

\noindent It is well-known that smoothing effect is mainly determined by asymptotic behavior of eigenvalues localized in large frequency zone (see Proposition \ref{Prop.EXT}). For this reason, we may observe smoothing effect is only influenced by the damping term $(-\Delta)^{\theta}u_t$ with $\theta\in(1/2,1]$ in the Cauchy problem \eqref{Eq.DoublyDissElasticWaves}. 
\end{rem}
\begin{rem}
From \cite{Reissig2016,ChenReissig2019SD}, we know the solution to elastic waves with friction $u_t$ does not have smoothing effect. However, in doubly dissipative elastic waves \eqref{Eq.DoublyDissElasticWaves}, the structural damping $(-\Delta)^{\theta}u_t$ with $\theta\in(1/2,1)$ brings Gevrey smoothing for the solutions even when $\rho=0$.
\end{rem}
\begin{rem}
In the present paper we  focus on energy estimates with initial data taking from $H^s\cap L^m$ for $s\geq0$, $m\in[1,2]$ or from $H^{s}\cap L^{1,\gamma}$ for $\gamma\in(0,1]$. Here we restrict ourselves on estimating solutions in the $L^2$ norm. For estimating the solutions in the $L^q$ norm with $2\leq q\leq\infty$, by applying Lemma 4.2 in \cite{Chen2019TEP}, one may obtain $L^p-L^q$ estimates with $1\leq p\leq 2\leq q\leq\infty$ and diffusion phenomena in a $L^p-L^q$ framework. 
\end{rem}
\begin{rem}
Our aim in this paper is to investigate dissipative structure and diffusion phenomena for doubly dissipative elastic waves \eqref{Eq.DoublyDissElasticWaves} in two spaces dimensions, especially, we obtain a new threshold for diffusion structure. We think it is also possible to study three dimensional doubly dissipative elastic waves without any new difficulties. The crucial point is to derive asymptotic behavior of eigenvalues and representation of solutions by using suitable diagonalization procedure.
\end{rem}
\section*{Acknowledgments} 

The PhD study of the author is supported by S\"achsiches Landesgraduiertenstipendium.

\bibliographystyle{elsarticle-num}

\end{document}